\PassOptionsToPackage{table}{xcolor}

\documentclass[a4paper]{article}
\usepackage[numbers]{natbib}
\usepackage{doi,hyperref}
\usepackage{graphicx}
\usepackage{amsmath,amsthm}
\usepackage{cleveref}

\newtheorem{definition}{Definition}[section]
\newtheorem{proposition}[definition]{Proposition}
\newtheorem{lemma}[definition]{Lemma}
\newtheorem{theorem}[definition]{Theorem}
\newtheorem{corollary}[definition]{Corollary}
\newtheorem{remark}[definition]{Remark}
\newtheorem{example}[definition]{Example}

\newtheorem{problem}[definition]{Problem}

\crefname{section}{Section}{Sections}
\crefname{definition}{Definition}{Definitions}
\crefname{theorem}{Theorem}{Theorems}
\crefname{lemma}{Lemma}{Lemmas}
\crefname{corollary}{Corollary}{Corollaries}
\crefname{proposition}{Proposition}{Propositions}
\crefname{remark}{Remark}{Remarks}
\crefname{example}{Example}{Examples}
\crefname{conjecture}{Conjecture}{Conjectures}
\crefname{question}{Question}{Questions}
\crefname{formula}{Formula}{Formulas}

\usepackage{authblk}

\usepackage[margin=2cm]{geometry}

\newenvironment{keywords}{%
  \par\vspace{0.5em}
  \noindent\textbf{Keywords:} 
}{\par\vspace{0.5em}}

\newenvironment{MSCcodes}{%
  \par\vspace{0.5em}
  \noindent\textbf{MSC2020:} 
}{\par\vspace{0.5em}}

\title{Polynomial Interpolation of a Vector Field on a Convex Polyhedral Domain}

\author{Junyan Chu}
\author{Shizuo Kaji}
\affil{Graduate School of Science, Kyoto University, Japan\thanks{
This research was partially supported by JST Moonshot R\&D Grant Number JPMJMS2021,
and
KAKENHI, Grant-in-Aid for Scientific Research (B) 25K00921 and (S) 25H00399.}}

\date{}

\usepackage{lipsum}
\usepackage{amsfonts,amssymb}
\usepackage{graphicx}
\usepackage{epstopdf}
\usepackage{algorithm, algorithmicx, algpseudocode}
\usepackage{tikz}
\usepackage{amsopn}

\ifpdf
  \DeclareGraphicsExtensions{.eps,.pdf,.png,.jpg}
\else
  \DeclareGraphicsExtensions{.eps}
\fi

\usepackage{enumitem}
\setlist[enumerate]{leftmargin=.5in}
\setlist[itemize]{leftmargin=.5in}

\crefname{problem}{Problem}{Problems}

\newcommand{\D}{{\mathcal{D}}}
\newcommand{\Do}{\overline{\mathcal{D}}}

\newcommand{\Poly}[1]{{\mathrm{Poly}({#1})}}
\newcommand{\Pt}[1]{{\mathrm{Poly_{\partial}}({#1})}}
\newcommand{\Pto}[1]{{\overline{\mathrm{Poly}}}_{\partial}({#1})}
\newcommand{\error}{\mathcal{E}}

\newcommand{\DF}[1]{{\mathrm{DivFree}({#1})}}
\newcommand{\Dt}[1]{{\mathrm{DivFree_{\partial}}({#1})}}
\newcommand{\RF}[1]{{\mathrm{RotFree}({#1})}}
\newcommand{\Rt}[1]{{\mathrm{RotFree_{\partial}}({#1})}}
\newcommand{\Harm}[1]{{\mathrm{Harm}({#1})}}
\newcommand{\Ht}[1]{{\mathrm{Harm_{\partial}}({#1})}}

\newcommand{\syz}{\mathrm{Syz}}

\newcommand{\mP}{\mathcal{P}}
\newcommand{\R}{\mathbb{R}}
\newcommand{\Rx}{\mathbb{R}[x_1,\ldots,x_d]}
\newcommand{\hRx}{\mathbb{R}[x_0,x_1,\ldots,x_d]}

\newcommand{\Obs}{\mathcal{O}}

\newcommand{\hal}{\widehat \alpha}
\newcommand{\hx}{\widehat x}
\newcommand{\hxi}{\widehat \xi}
\newcommand{\hf}{\widehat f}

\newcommand{\hgg}{\widehat G}

\newcommand{\hg}{\widehat g}
\newcommand{\hH}{\widehat H}
\newcommand{\hh}{\widehat h}
\newcommand{\hmp}{\widehat{\mathcal{P}}}

\ifpdf
\hypersetup{
  pdftitle={Polynomial Vector Field Reconstruction},
  pdfauthor={J. Chu and S. Kaji}
}
\fi

\begin{document}

\maketitle

\begin{abstract}
We present a computational method for reconstructing a vector field on a convex polytope $\mP \subset \R^d$ of arbitrary dimension from discrete samples. We specifically address the scenario where the vector field is subject to a no-penetration (slip) boundary condition, requiring it to be tangent to the boundary $\partial \mP$. Given a degree bound $k$, our algorithm computes a polynomial vector field of degree at most $k$ that fits the observed data in the least-squares sense while exactly satisfying the tangency constraints. Central to our approach is an explicit characterization of the module of polynomial vector fields tangent to $\partial \mP$, derived using algebraic concepts from the theory of hyperplane arrangements.
\end{abstract}

\begin{keywords}
vector field reconstruction, polynomial interpolation, no-penetration boundary condition, hyperplane arrangement
\end{keywords}

\begin{MSCcodes}
37M10, 
65D15, 
52C35, 
41A10, 
13N15  
\end{MSCcodes}

\section{Introduction}
Model fitting aims to identify a function within a parameterized family that accurately reproduces the observed data. A classical example is curve fitting, where a finite set of points is interpolated or approximated using splines or polynomials. More generally, given a collection of input--output pairs $\{(X_i, Y_i)\}$ and a model family $\{F_\theta\}$ indexed by parameters $\theta$, the objective is to determine the optimal $\theta$ such that $F_\theta(X_i)$ approximates $Y_i$ with minimal error. 

In this work, we consider a specific instance of this problem in which the observations are vector-valued and confined within a convex polytope $\mP \subset \R^d$, and the desired model is a polynomial vector field subject to two constraints:
\begin{enumerate}[label=(\roman*)]
    \item it minimizes the approximation error relative to the data; and
    \item it \emph{exactly} satisfies the no-penetration (slip) boundary condition along the boundary $\partial \mP$.
\end{enumerate}
This setting arises naturally in the modeling of fluid or particle dynamics within rigid containers, where velocity measurements may be sparse. In such contexts, strictly enforcing tangency is critical for maintaining physical consistency, particularly for conservation laws such as mass preservation.

We propose an algorithm that, for a prescribed degree $k \in \mathbb{N}$, constructs a polynomial vector field of degree at most $k$ that minimizes the squared error relative to the observations while exactly satisfying the no-penetration boundary condition. A key insight of our approach is that the no-penetration boundary condition corresponds precisely to the defining property of the \emph{logarithmic derivation module} associated with a hyperplane arrangement--specifically, the arrangement formed by the affine hyperplanes supporting the facets of $\mP$. This identification allows us to reformulate the fitting problem in purely algebraic terms, facilitating an efficient computational solution.

Specifically, our method proceeds as follows:
\begin{enumerate}
    \item Represent the polytope $\mP$ as the intersection of finitely many half-spaces.
    \item Homogenize the defining affine hyperplanes to obtain a central arrangement $\hmp$ in $\R^{d+1}$.
    \item Establish an explicit bijection between polynomial vector fields of degree $\le k$ that are tangent to $\partial \mP$ (no-penetration boundary condition) and homogeneous vector fields of degree $k$ on $\R^{d+1}$ that are tangent to every hyperplane of $\hmp$ and have vanishing component in the direction of the homogenizing coordinate.
    \item Identify the latter space with the syzygy module of the Jacobian ideal of the defining polynomial of $\hmp$.
    \item Employ Gr\"obner basis techniques (specifically Schreyer's algorithm) to compute a basis for this module, reducing the problem to linear least-squares fitting.
\end{enumerate}

To the best of our knowledge, this is the first algorithm guaranteeing exact satisfaction of the tangency boundary condition for polynomial vector field interpolation on general convex polyhedral domains. 
From a theoretical perspective, our framework bridges seemingly disparate fields by applying algebro-geometric techniques to a classical approximation problem, demonstrating the utility of derivation modules in applied settings.

Our contributions are summarized as follows:
\begin{itemize}
    \item \textbf{Exact constraint satisfaction}: Unlike penalty-based or soft-constraint methods, our approach ensures the tangential boundary condition is met exactly for all polynomial degrees.
    \item \textbf{Arbitrary dimension}: Our framework is applicable to convex polytopes in any dimension $d$, not restricted to 2D or 3D cases.
    \item \textbf{Integration with least-squares fitting}: By explicitly computing a basis for the logarithmic derivation module, we reduce the geometric tangency problem to a standard linear least-squares optimization over the basis coefficients. This structure naturally accommodates additional linear constraints, such as divergence-free or curl-free conditions.
    In particular, when combined with a divergence-free condition, the volume is conserved exactly (\Cref{prb:conditions}).
\end{itemize}

\bigskip

\noindent\textbf{Notation.} Throughout this paper, we use the following notation:
\begin{itemize}
    \item $d$ indicates the dimension of the ambient space $\R^d$.
    \item $\Rx$ denotes the ring of polynomials in $d$ variables with real coefficients.
    \item The index of the hyperplanes $\{H_i\}$ is denoted by $1\le i\le m$.
    \item $k$ denotes the degree of a polynomial or a polynomial vector field.
    \item $\xi=(f_1,\ldots,f_d)$ denotes a (non-homogeneous) vector field on $\R^d$.
    \item $\hxi=(f_0,f_1,\ldots,f_d)$ denotes a homogeneous vector field on $\R^{d+1}$.
    \item $x=(x_1,\ldots,x_d)$ denotes a point in $\R^d$.
    \item $\hx=(x_0,x_1,\ldots,x_d)$ denotes a point in $\R^{d+1}$.
    \item The index $q$ denotes a specific component, as in $x_q$ or $f_q$.
    \item $\Obs$ denotes the index set of observations, with elements denoted by $s$.
\end{itemize}
\section{Related Work}

The reconstruction and interpolation of vector fields from sparse data lie at the intersection of numerical analysis, dynamical systems, and approximation theory. This section reviews relevant prior work and clarifies how our algebraic approach differs from existing methods.

\subsection{Vector Field Reconstruction and Interpolation}
Reconstructing vector fields from discrete observations has a long history, originating in practical applications such as meteorology. Early applications, such as those by Schaefer and Doswell~\cite{schaefer1979interpolation}, focused on interpolating wind fields from station data.
For scattered data interpolation, basis expansion serves as a foundational tool. Mussa-Ivaldi~\cite{Mussa-Ivaldi1992} formalized basis expansions for vector fields as a generalization of scalar interpolation. Subsequent works have refined these techniques, notably through the use of radial basis functions (RBFs)~\cite{Smolik, dodu2002vectorial} and local polynomial approximations~\cite{scheuermann1998visualizing, Lage}.
While effective for general approximation, these approaches typically do not guarantee constraints along boundaries.

Recent advances in machine learning have introduced deep learning approaches for flow reconstruction~\cite{Han}. While these methods can capture complex nonlinear dynamics, they generally require substantial training data. Moreover, unlike the algebraic method proposed in this work, neural network approaches, including Physics-Informed Neural Networks (PINNs), typically enforce boundary conditions and conservation laws via soft constraints (penalty terms in the loss function). Consequently, they do not guarantee exact compliance with the no-penetration condition.

Distinct from spatial interpolation, ``global vector field reconstruction'' in dynamical systems aims to recover governing differential equations from temporal trajectory data. Gouesbet and Letellier~\cite{GouesbetLetellier1994} established methods for fitting polynomial models from time series observations, grounded in Takens' embedding theorem.

\subsection{Structure-Preserving Approximation and Boundary Conditions}
A significant body of literature addresses the preservation of differential properties, such as divergence-free (incompressible) or curl-free fields~\cite{NarcowichWard1994,FarrellGillowWendland2017}. 
These methods primarily focus on differential constraints in the interior of the domain rather than tangency along the boundary.
Boundary conditions are typically imposed weakly via penalty terms, which do not guarantee exact tangency at all points on the boundary.

In the context of computer graphics and discrete differential geometry, the design of tangent vector fields on surface meshes is a mature field~\cite{zhang2006,fisher2007design,dogoes}. These methods deal with vector fields tangent to curved surfaces embedded in $\R^3$. In contrast, our work addresses vector fields in $\R^d$ that are tangent to the boundary of a polyhedral domain.

A distinct but related area is the study of unit vector fields on convex polyhedra, motivated by liquid crystal modeling. Robbins and Zyskin have classified the homotopy types of such fields~\cite{Robbins_2004}. While these works share the geometric domain (polyhedra) and the tangency condition, their focus is on the topological classification of unit-length fields rather than the approximation of arbitrary data with polynomial fields.

\subsection{Hyperplane Arrangements and Logarithmic Derivations}
The theoretical foundation of our method lies in the theory of hyperplane arrangements~\cite{OrlikTerao1992}, initiated by Saito~\cite{ref_17_in_spog}. Saito introduced the module of logarithmic derivations, $D(\mathcal{A})$, consisting of polynomial vector fields tangent to a set of hyperplanes $\mathcal{A}$. When this module is free, the arrangement is called a \emph{free arrangement}, a concept extensively studied by Terao~\cite{terao1980arrangements}.
To the best of our knowledge, the connection between Saito's theory of logarithmic derivations and the practical, data-driven problem of vector field reconstruction has not been previously exploited.
\section{Polynomial Interpolation of Vector Fields}
In this section, we establish the necessary geometric notation and formally state the interpolation problem.

\begin{definition}\label{def:polyhedron}
A \emph{convex polyhedral space} in $\R^d$ ($d \geq 1$) is a (possibly unbounded) set defined as the intersection of a finite number of closed half-spaces. Let $\{\alpha_i \in \R^d \setminus \{0\} \}_{i=1}^m$ be a set of normal vectors and $\{\ell_i \in \R \}_{i=1}^m$ a set of scalar offsets. We define:
\[
\mP = \bigcap_{i=1}^m \{x \in \R^d \mid \langle \alpha_i, x \rangle + \ell_i \leq 0\},
\]
where $\langle \ , \ \rangle$ denotes the standard inner product on $\R^d$. The hyperplane $H_i = \{x \in \R^d \mid \langle \alpha_i, x \rangle + \ell_i = 0\}$ is called a supporting hyperplane of $\mP$, and $F_i = H_i \cap \partial \mP$ is called a facet (if it is $(d - 1)$-dimensional). We denote by $h_i$ the affine linear function
\[
h_i(x) = \langle \alpha_i, x \rangle + \ell_i = \sum_{q=1}^d (\alpha_i)_q x_q + \ell_i,
\]
which defines the hyperplane $H_i$.

We assume the representation is minimal; that is, no supporting hyperplane is redundant. Specifically,
\[
\mP \subsetneq \bigcap_{i \neq j} \{x \in \R^d \mid \langle \alpha_i, x \rangle + \ell_i \leq 0\}
\]
for any $j\in \{1,\ldots,m\}$. 
In the case $m=0$, we set $\mP = \R^d$. By abuse of notation, we may identify $\mP$ with its defining set of hyperplanes $\{H_i\}_{i=1}^m$, assuming the orientations of the half-spaces are fixed.
\end{definition}

We now define the spaces of polynomial vector fields associated with $\mP$.

\begin{definition}\label{def:vectorfields}
A homogeneous polynomial vector field of degree $k$ on $\mP$ is a map $\xi : \mP \to \R^d$ represented by a tuple of homogeneous polynomials $(f_1, f_2, \ldots, f_d)$, where each $f_q \in \Rx$ has $\deg(f_q) = k$ for $1 \leq q \leq d$. Denote by $\Poly{\mP}_k$ the space of such vector fields. Similarly, denote by $\Poly{\mP}_{\leq k}$ the space of vector fields where each $f_q$ is a polynomial of degree at most $k$.

The subspace of $\Poly{\mP}_k$ (respectively, $\Poly{\mP}_{\leq k}$) consisting of vector fields tangent to the boundary of $\mP$ is defined by
\begin{equation}\label{eq:tangency}
\Pt{\mP}_k = \left\{ \xi \in \Poly{\mP}_k \mid \langle \xi(x), \alpha_i \rangle = 0 \text{ for all } x \in F_i,\ 1 \leq i \leq m \right\}.
\end{equation}
In other words, the direction of $\xi \in \Pt{\mP}_k$ at any point on a facet $F_i$ is orthogonal to the normal vector $\alpha_i$. The space $\Pt{\mP}_{\leq k}$ is defined analogously for non-homogeneous vector fields.

The space $\Pt{\mP} = \bigcup_{k \geq 0} \Pt{\mP}_{\leq k}$ of non-homogeneous polynomial tangential fields
and the space $\Pt{\mP}_{hom}=\bigoplus_{k\geq 0} \Pt{\mP}_k$ of homogeneous polynomial tangential fields are both $\Rx$-modules under componentwise multiplication of vector fields by polynomials.
\end{definition}

The following example illustrates the distinction between homogeneous and non-homogeneous polynomial tangential fields.
\begin{example}\label{ex:triangle}
Consider the case $d = 2$. Let $\alpha_1 = (1, 0)$, $\alpha_2 = (0, 1)$, and $\alpha_3 = (-1, -1)$, with $\ell_1 = \ell_2 = 0$ and $\ell_3 = 1$, so that $h_1 = x_1$, $h_2 = x_2$, and $h_3 = -x_1 - x_2 - 1$. Let $h_4=-h_3$. Define
\[
\mP_1 = \{x \mid h_1(x) \leq 0,\ h_2(x) \leq 0,\ h_3(x) \leq 0\}, \quad
\mP_2 = \{x \mid h_1(x) \leq 0,\ h_2(x) \leq 0,\ h_4(x) \leq 0\}.
\]
Then $\mP_1$ is bounded, and $\mP_2$ is unbounded. Both are convex polyhedral spaces in $\R^2$, as illustrated in \Cref{fig:triangle}.

For either $\mP_1$ or $\mP_2$, the vector field $\xi = (x_1 x_2, x_2^2 + x_2)$ is a non-homogeneous tangential field. We verify the tangency condition facet by facet:

- On $F_1$ (defined by $x_1 = 0$), $\xi(0, x_2) = (0, x_2^2 + x_2)$, and $\alpha_1 = (1, 0)$, so $\langle \xi, \alpha_1 \rangle = 0$.

- On $F_2$ (defined by $x_2 = 0$), $\xi(x_1, 0) = (0, 0)$, and $\alpha_2 = (0, 1)$, so $\langle \xi, \alpha_2 \rangle = 0$.

- On $F_3$ (defined by $-x_1 - x_2 - 1 = 0$, or $x_1 = -1 - x_2$), the normals are $\alpha_3 = (-1, -1)$ and $\alpha_4 = (1, 1)$, and
\begin{align*}
&  \xi(-1-x_2,x_2) = ((-1 - x_2)x_2, x_2^2 + x_2), \\
& \langle \xi, \alpha_3 \rangle = -\langle \xi, \alpha_4 \rangle = -\left((-1 - x_2)x_2 +(x_2^2 + x_2) \right)= 0.
\end{align*}
Thus, $\xi$ is tangent to all three facets.

Note that the degree-one component of $\xi$, namely $(0, x_2)$, is not tangential to $F_3$, since $\langle (0, x_2), (1, 1) \rangle = x_2 \neq 0$ in general. This demonstrates that $\bigoplus_{j=0}^k \Pt{\mP}_j \subsetneq \Pt{\mP}_{\leq k}$ in general, making a degree-wise analysis of $\Pt{\mP}_{\leq k}$ more intricate.

\end{example}

\begin{figure}
    \centering
    \includegraphics[width=0.3\textwidth]{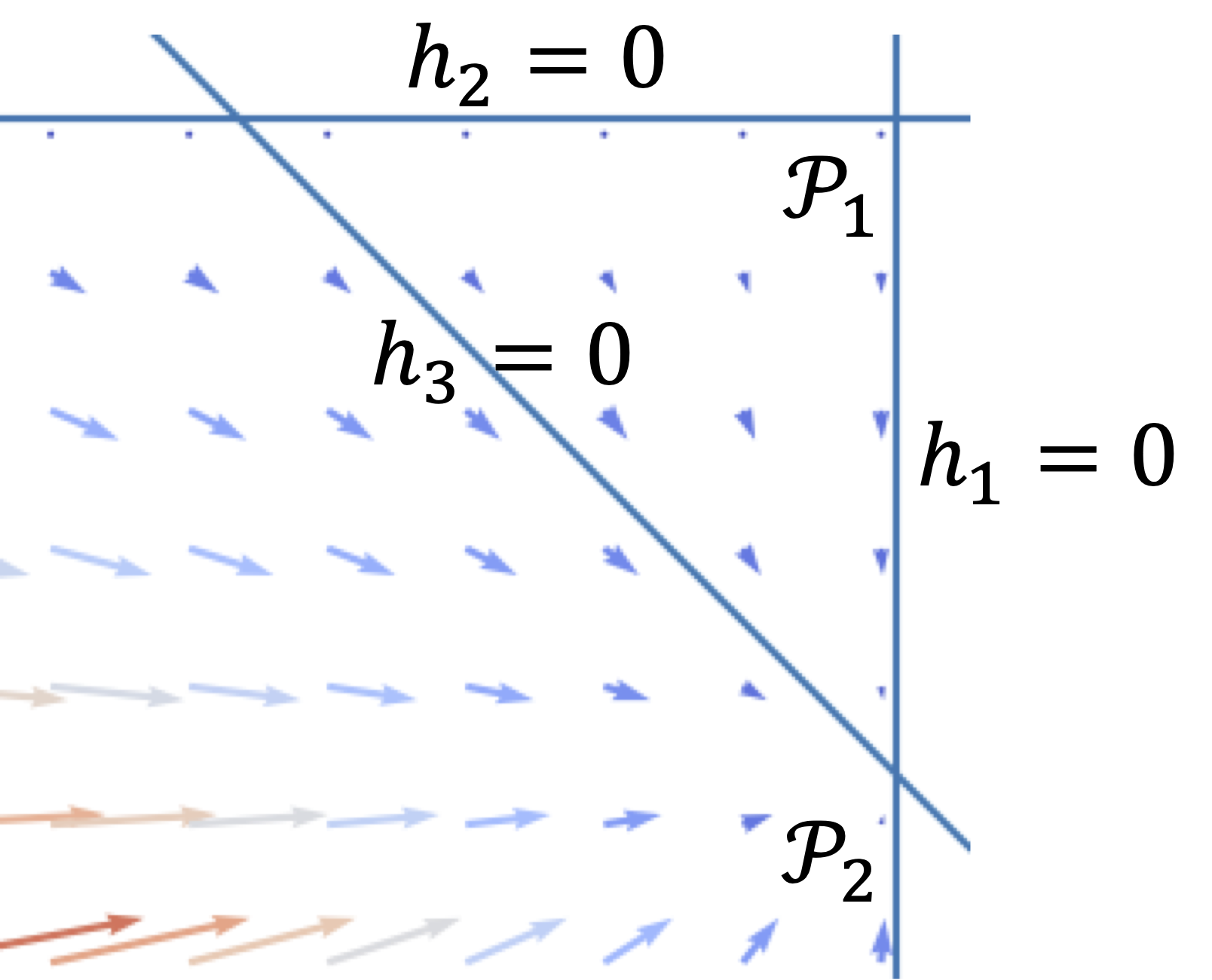}
    \caption{
    Examples of convex polyhedral spaces in $\R^2$ and a tangential vector field defined on them.
    }
    \label{fig:triangle}
\end{figure}

A vector field $\xi = (f_1, f_2, \ldots, f_d)$ acts on a polynomial $h \in \Rx$ via the Lie derivative:
\begin{equation}\label{eq:action}
    \xi[h] = \langle \nabla h, \xi \rangle = \sum_{q=1}^d f_q \frac{\partial h}{\partial x_q}.
\end{equation}
For brevity, we denote $\frac{\partial}{\partial x_q}$ by $\partial_q$.

\medskip

We now formulate our main goal. Suppose we are given a set of \emph{observations}
\[
\left\{ (x_s, u_s) \in \mP \times \R^d \mid s \in \Obs \right\},
\]
indexed by a finite set $\Obs$. 
\begin{problem}\label{prb:degree}
    Given $k\in\mathbb{N}$, find $\xi \in \Pt{\mP}_{\leq k}$ that minimizes the sum of squared errors:
    \begin{equation}\label{eq:error}        
    \error(\xi, \Obs) =
    \sum_{s \in \Obs} \left|\xi(x_s) - u_s \right|^2.
    \end{equation}
\end{problem}

Problem \ref{prb:degree} seeks a polynomial model that optimally fits the data (in the least-squares sense) while \emph{strictly} enforcing the no-penetration boundary condition on $\partial \mP$. The degree $k$ serves as a hyperparameter controlling the trade-off between model expressiveness and complexity.

\section{Identity Theorem for Polynomial Vector Fields}

It is straightforward to verify that any polynomial vector field defined on $\mP$ can be uniquely extended to a polynomial vector field on all of $\R^d$, so that $\Poly{\mP}_k = \Poly{\R^d}_k$. In this section, we establish an analogous result for tangential vector fields.
We begin by recalling a classical fact: a polynomial function is uniquely determined by its values on a sufficiently large finite set.

\begin{lemma}\label{lem:zero_set_poly}
Let $k \in \mathbb{N}$ and $d \geq 1$, and let $f, g \in \Rx$ be polynomials of degree at most $k$. Suppose $X = C_1 \times \cdots \times C_d \subset \R^d$, where each $C_q \subset \R$ satisfies $|C_q| > k$. Then $f \equiv g$ if and only if $f(x) = g(x)$ for all $x \in X$. Consequently, two vector fields $\xi, \gamma \in \Poly{\R^d}_{\le k}$ coincide on $\R^d$ if and only if $\xi(x) = \gamma(x)$ for all $x \in X$.
\end{lemma}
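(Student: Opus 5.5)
Set $p = f - g$. Then $\deg p \le k$, and it suffices to show that a polynomial of degree at most $k$ vanishing on $X = C_1 \times \cdots \times C_d$ is the zero polynomial. The direction ``$f \equiv g \Rightarrow f = g$ on $X$'' is trivial. I will argue by induction on $d$, using only the one-variable fact that a nonzero univariate polynomial of degree at most $k$ has at most $k$ real roots.

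For the base case $d = 1$, $p$ vanishes on $C_1$ with $|C_1| > k \ge \deg p$. Hence $p$ has more roots than its degree allows, so $p \equiv 0$.

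For the inductive step, write
\[
p(x_1,\ldots,x_d) = \sum_{j=0}^{k} p_j(x_1,\ldots,x_{d-1})\, x_d^{\,j},
\]
where each $p_j \in \mathbb{R}[x_1,\ldots,x_{d-1}]$ has degree at most $k - j \le k$. Fix $c' \in C_1 \times \cdots \times C_{d-1}$. The univariate polynomial $t \mapsto p(c', t)$ has degree at most $k$ and vanishes on $C_d$, and $|C_d| > k$. So all its coefficients vanish, that is, $p_j(c') = 0$ for every $j$. Thus each $p_j$ vanishes on $C_1 \times \cdots \times C_{d-1}$. By the induction hypothesis (with the same bound $k$), each $p_j \equiv 0$, and therefore $p \equiv 0$.

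For the vector-field statement, apply the scalar result componentwise to $\xi - \gamma = (f_1 - g_1, \ldots, f_d - g_d)$. Each component has degree at most $k$ and vanishes on $X$, so $\xi = \gamma$ on $\R^d$.

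There is no real obstacle here. The one point to be careful about is the degree bookkeeping in the inductive step: each coefficient $p_j$ must have degree at most $k$, so that the induction hypothesis applies with the same $k$ and the same cardinality condition $|C_q| > k$.
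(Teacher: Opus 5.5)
Your proof is correct and takes essentially the same route as the paper: reduce to $f-g$, induct on $d$, and use that the univariate slice $t\mapsto p(c',t)$ vanishes on $C_d$ to force the coefficients of the powers of $x_d$ to vanish. Your version is slightly more careful than the paper's, because you state explicitly that each coefficient $p_j$ vanishes on $C_1\times\cdots\times C_{d-1}$ and has degree at most $k$ before you invoke the induction hypothesis.
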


\begin{proof}
By considering $f - g$ in place of $f$, it suffices to show that a polynomial $f$ of degree at most $k$ vanishes identically if $f(x) = 0$ for all $x \in X$.

We proceed by induction on $d$. The base case $d = 1$ follows from the classical fact that a univariate polynomial of degree at most $k$ with more than $k$ zeros must be identically zero. Assume the result holds for $d - 1$. For a fixed $(x_1, \dots, x_{d-1}) \in C_1 \times \cdots \times C_{d-1}$, consider the univariate polynomial
\[
p(t) = f(x_1, \dots, x_{d-1}, t).
\]
Since $p$ vanishes on $C_d$, which contains more than $k \geq \deg p$ points, we conclude $p \equiv 0$. Thus, all coefficients of powers of $x_d$ in $f$ vanish as polynomials in $x_1, \dots, x_{d-1}$. By the induction hypothesis, $f \equiv 0$.
\end{proof}

\begin{corollary}\label{cor:hyperplane_test}
Let $H \subset \R^d$ be a hyperplane with non-zero normal vector $\alpha$. A polynomial vector field $\xi \in \Poly{\R^d}_{\le k}$ is tangent to $H$ if and only if $\langle \xi(x), \alpha \rangle = 0$ on some non-empty open subset of $H$.

Consequently, every $\xi \in \Pt{\mP}_{\le k}$ admits a unique extension to a polynomial vector field in $\Poly{\R^d}_{\le k}$ that is tangent to all supporting hyperplanes of the polyhedral domain $\mP$.
\end{corollary}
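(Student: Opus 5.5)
The plan is to reduce tangency to the vanishing of a single polynomial on $H$ and then apply \Cref{lem:zero_set_poly} in affine coordinates on $H$. Tangency of $\xi$ to $H$ means $\langle \xi(x), \alpha \rangle = 0$ for all $x \in H$. The forward direction is immediate: $H$ itself is a non-empty open subset of $H$.

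For the converse, set $g(x) = \langle \xi(x), \alpha \rangle = \sum_q \alpha_q f_q(x)$, a polynomial of degree at most $k$ on $\R^d$. I will write $H = \{x \mid \langle \alpha, x\rangle + \ell = 0\}$ and choose an affine parametrization $\phi:\R^{d-1}\to H$, $\phi(t) = p + \sum_{j=1}^{d-1} t_j v_j$. Here $p\in H$ and $v_1,\dots,v_{d-1}$ is a basis of $\alpha^\perp$. Then $\tilde g = g\circ\phi$ is a polynomial in $t$ of degree at most $k$, since composing with an affine map does not raise degree.

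An open set $U\subset H$ pulls back under the homeomorphism $\phi$ to an open set $\phi^{-1}(U)\subset\R^{d-1}$. That set contains a small box $C_1\times\cdots\times C_{d-1}$ in which each $C_j$ consists of $k+1$ distinct points. Since $\tilde g$ vanishes on this box, \Cref{lem:zero_set_poly} gives $\tilde g\equiv 0$, so $g$ vanishes on all of $H$. When $d=1$, $H$ is a single point and the statement is trivial.

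For the consequence, take $\xi\in\Pt{\mP}_{\le k}$. By the remark preceding \Cref{lem:zero_set_poly}, its polynomial components extend uniquely to $\R^d$. Uniqueness also follows directly from \Cref{lem:zero_set_poly}, because $\mP$ has non-empty interior and so contains a suitable grid. One caveat: if $\mP$ has empty interior, I would assume full dimensionality, which the minimality assumption is implicitly meant to guarantee.

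It remains to show that the extension is tangent to every $H_i$. The tangency hypothesis holds on $F_i$, so it suffices to show that $F_i$ contains a non-empty relatively open subset of $H_i$. This is the main obstacle, and I will derive it from minimality.

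\begin{itemize}
\item By the minimality assumption there is a point $y$ satisfying $h_j(y)\le 0$ for all $j\ne i$ but $h_i(y)>0$.
\item Taking an interior point $z$ of $\mP$, for which all $h_j(z)<0$, the segment from $z$ to $y$ crosses $H_i$ at a point $w$ with $h_j(w)<0$ for all $j\ne i$. This holds by convexity of each $h_j$ along the segment, using strictness at $z$.
\item By continuity, a neighborhood of $w$ in $H_i$ satisfies all the strict inequalities $h_j<0$ for $j\ne i$. That neighborhood lies in $\mP \cap H_i \subset \partial\mP\cap H_i = F_i$.
\end{itemize}

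Applying the first part on this neighborhood shows the extension is tangent to $H_i$, which completes the plan.
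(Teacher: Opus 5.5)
Your proposal is correct and follows the paper's route. The paper's entire proof is to apply \Cref{lem:zero_set_poly} to $\langle \xi(x),\alpha\rangle$; you make this precise by pulling back to affine coordinates on $H$, and by deriving from minimality that each $F_i$ contains a relatively open piece of $H_i$. The paper leaves both of these details implicit.

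One small correction: minimality does not by itself force $\mP$ to have non-empty interior. For example, $\{x_1\le 0\}\cap\{-x_1\le 0\}$ is a minimal representation of a line in $\R^2$, and there uniqueness of the extension of the map on $\mP$ genuinely fails. So full-dimensionality should be stated as an explicit extra hypothesis, as it is tacitly assumed in the paper.
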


\begin{proof}
Apply \Cref{lem:zero_set_poly} to the polynomial $\langle \xi(x), \alpha \rangle$.
\end{proof}

By \Cref{cor:hyperplane_test}, the space $\Pt{\mP}_{\leq k}$ can be identified with the solution space of a linear system. However, in the next section, we introduce a more sophisticated approach for computing $\Pt{\mP}_{\leq k}$.

\section{Space of Tangent Fields}\label{sec:interpolation}

\Cref{prb:degree} reduces to a standard least-squares problem, provided a basis for $\Pt{\mP}_{\le k}$ is available. Our key idea is to recast the boundary condition as a module membership problem in a polynomial ring, and to identify $\Pt{\mP}_{\le k}$ with the syzygy module of a Jacobian ideal, which can be effectively computed using Gr\"obner basis techniques.

We begin with a simple observation that translates
the tangency condition into an algebraic one.
\begin{lemma}\label{lem:1tangent}
Let $h \in \Rx$ be a degree-one polynomial defining the hyperplane $H=\{x\in \R^{d}\mid h(x)=0\}$. 
A polynomial vector field $\xi$ on $\R^{d}$ is tangent to $H$ if and only if $\xi[h] \in (h)$, where $(h)$ is the principal ideal generated by $h$ in the ring $\Rx$.
\end{lemma}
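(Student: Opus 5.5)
The plan is to make the tangency condition concrete and then divide by $h$. Write $h(x)=\langle \alpha,x\rangle+\ell$ with $\alpha\neq 0$, so $\nabla h=\alpha$ is constant. By \eqref{eq:action}, $\xi[h]=\sum_q f_q\,\alpha_q=\langle \xi(x),\alpha\rangle$, which is a single polynomial $g\in\Rx$. Tangency of $\xi$ to $H$ means exactly that $g$ vanishes at every point of $H$. The lemma therefore reduces to the classical statement that a polynomial vanishing on the hyperplane $\{h=0\}$ is divisible by $h$; the converse is trivial.

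For the easy direction, suppose $\xi[h]=p\,h$ for some $p\in\Rx$. Evaluating at $x\in H$ gives $\langle\xi(x),\alpha\rangle=p(x)h(x)=0$, so $\xi$ is tangent to $H$.

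For the main direction, suppose $g$ vanishes on $H$.
\begin{itemize}
\item Since $\alpha\neq 0$, choose an index $q$ with $\alpha_q\neq 0$.
\item Perform an invertible affine change of coordinates $y_q=h(x)$, $y_j=x_j$ for $j\neq q$. This is a ring automorphism of $\Rx$ sending $h$ to the variable $y_q$, and it preserves both degree and the ideal structure.
\item Regard $g$ as a polynomial in $y_q$ with coefficients in the remaining variables. Division gives $g=y_q\,p+r$, where $r$ does not involve $y_q$.
\item Vanishing on $H=\{y_q=0\}$ means $r(y_1,\dots,\widehat{y_q},\dots,y_d)=0$ for all real values of the other coordinates.
\item By \Cref{lem:zero_set_poly}, applied in $d-1$ variables with grids of more than $\deg r$ points, we get $r\equiv 0$. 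If $d=1$, $r$ is a constant vanishing at the point $H$, so $r=0$ as well.
\end{itemize}
Hence $g=p\,h$, that is, $\xi[h]\in(h)$.

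The only real obstacle is this last step: passing from pointwise vanishing on $H$ to ideal membership. This relies on the base field being infinite, which is precisely what \Cref{lem:zero_set_poly} supplies. The affine substitution is the clean way to make $h$ a coordinate so that the division has a zero remainder. If one's notion of tangency only requires vanishing on a facet rather than on all of $H$, we first invoke \Cref{cor:hyperplane_test} to extend the vanishing to all of $H$, and then proceed as above.
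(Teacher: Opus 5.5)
Your proof is correct, but it follows a different route from the paper's. The paper argues ideal-theoretically: tangency means $\xi[h]\in I(V(h))$, and it then asserts $I(V(h))=\sqrt{(h)}=(h)$, using that $(h)$ is prime. You make $h$ a coordinate by an affine change of variables, divide by it, and kill the remainder with \Cref{lem:zero_set_poly}.

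The paper's version is shorter, but its first equality $I(V(h))=\sqrt{(h)}$ is Hilbert's Nullstellensatz, which requires an algebraically closed field and fails over $\R$ in general. For instance, in $\R[x_1,x_2]$ take $h=x_1^2+x_2^2$. Then $V(h)=\{0\}$, so $I(V(h))=(x_1,x_2)$, whereas $\sqrt{(h)}=(h)$ because $h$ is irreducible over $\R$.

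For a linear $h$ the identity $I(V(h))=(h)$ does hold over $\R$. Justifying it needs an extra real-specific argument, for example the real Nullstellensatz together with the fact that $\Rx/(h)$ is again a polynomial ring. Your division step supplies exactly such an argument.

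Your version is self-contained, uses only the grid lemma already proved, and works over any infinite field. It also never needs the primality of $(h)$. Your separate treatment of $d=1$ and your remark on reducing facet-tangency to hyperplane-tangency via \Cref{cor:hyperplane_test} are both fine.
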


\begin{proof}
  Write
  $h(x)=\langle \alpha,x\rangle + \ell$
  so that $\nabla h=\alpha$.
  By \Cref{eq:tangency} and \Cref{eq:action},
$\xi$ is tangent to
  $H$ if and only if
  \[
    0=\langle \alpha,\xi(x)\rangle
    =\langle \nabla h, \xi(x) \rangle
    =\xi[h](x)
    \quad\forall\,x\in H.
  \]
  Hence
  \[
    \xi\text{ tangent to }H
    \;\Longleftrightarrow\;
    \xi[h]\big|_{V(h)}\equiv0
    \;\Longleftrightarrow\;
    \xi[h]\in I\bigl(V(h)\bigr),
  \]
    where we write $H$ as the variety $V(h)$ defined by $h$, and $I\bigl(V(h)\bigr)$ is the vanishing ideal of 
    the variety $V(h)$.
  Since $h$ is a nonzero degree-one polynomial, the ideal $(h)$ is
  prime. Therefore, the assertion follows by
    $I\bigl(V(h)\bigr)=\sqrt{(h)}=(h)$.
\end{proof}

To use algebraic machinery, it is more convenient to work with linear subspaces than with affine subspaces ${H_i}$.
For this, we embed $\mP$ into one dimension higher space
by appending one more variable $x_0$.
\begin{definition}
For a convex polyhedral space $\mP\subset \R^d$,
its \emph{cone} $\hmp$
is the (unbounded) convex polyhedral space  in $\R^{d+1}$
defined by the following supporting hyperplanes
\[
\hH_i=\left\{\hx=(x_0,x_1,\ldots,x_d)\in \R^{d+1}\mid \ell_i x_0+\sum_{q=1}^{d} (\alpha_i)_q x_q=0\right\}.
\]
We denote by $\hh_i$ the corresponding linear form.
See \cref{fig:cone}.
\end{definition}

\begin{figure}
    \centering
    \includegraphics[width=0.4\textwidth]{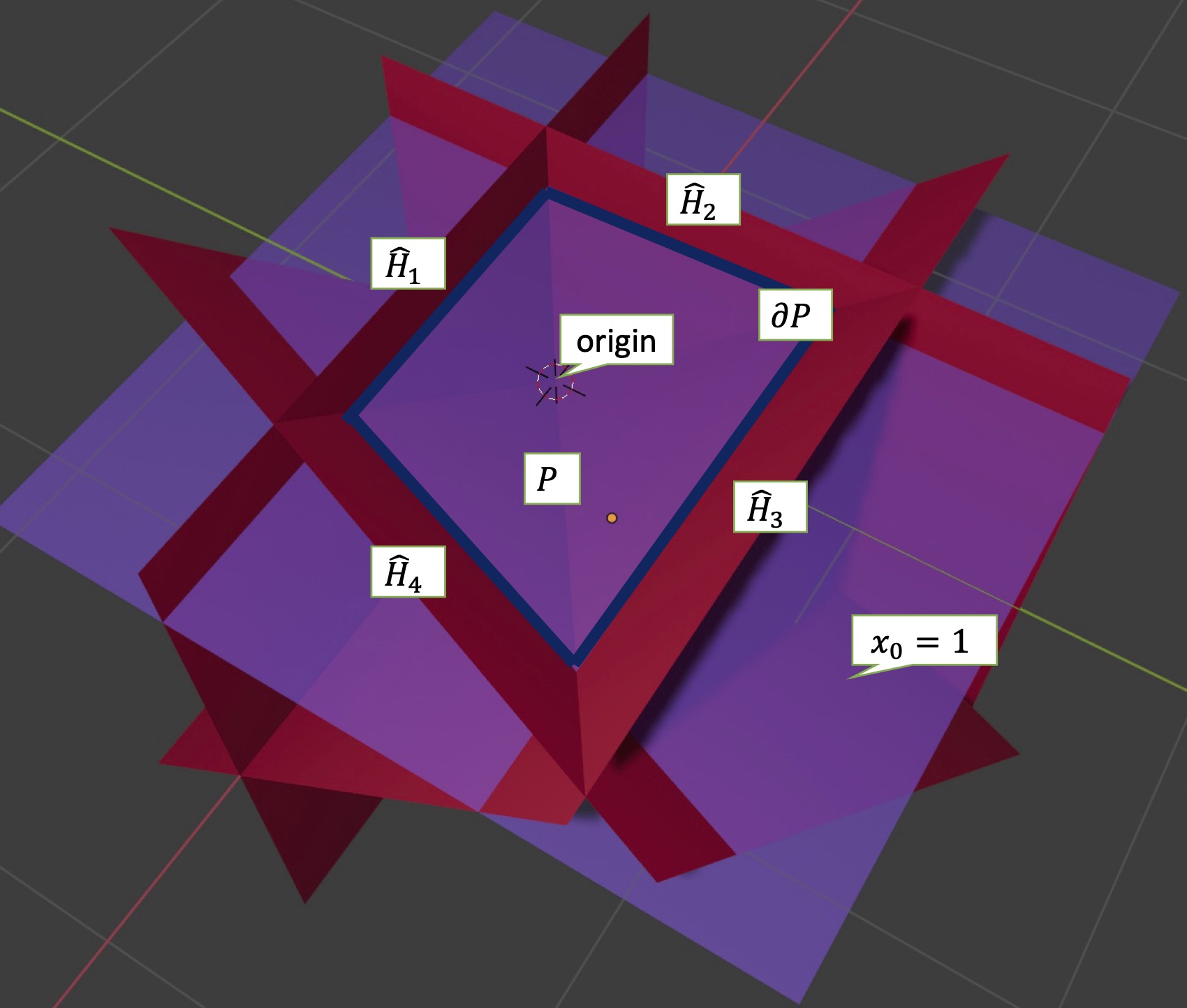}
    \caption{
    The cone $\hmp$ in $\R^{3}$
    corresponding to a trapezoid $\mP$ in $\R^2$.
    First, $\mP$ is embedded into $\R^{3}$ on the plane $x_0=1$. By joining the origin with each facet of $\mP$, we obtain hyperplanes in $\R^{3}$ that share the origin as the apex.
    An element of $\Pt{\mP}$ can be extended to a field tangent to $\hmp$ simply by scaling by the ``height'' $x_0$.
    The field thus obtained is parallel to the plane $x_0=0$.
    Conversely, any homogeneous tangential field to $\hmp$ can be made parallel to $x_0=0$ by subtracting an $\Rx$-multiple of $\xi_E=(x_1,\ldots,x_d)$, which is a ``radial'' vector field that is tangent to any plane going through the origin.
    By restricting the parallel field to $x_0=1$, we obtain an element of $\Pt{\mP}$.
    }
    \label{fig:cone}
\end{figure}

All the facets of $\hmp$ meet at the origin,
and
the original convex polyhedral space $\mP$ is embedded in $\hmp$ at the slice $x_0=1$, hence the name ``cone''.
We will see that there is also a correspondence between
certain tangential polynomial vector fields of $\mP$
and $\hmp$.
A small but important trick is to add a special 
plane $\hH_0=\{x_0=0\}$.
Let $\Pto{\hmp}_{k}$
be the subspace of $\Pt{\hmp}_{k}$
consisting of those fields parallel to $\hH_0$.
That is, the first coordinate of an element
of $\Pto{\hmp}_{k}$ is the zero polynomial.
\begin{lemma}\label{embedding}
    We have an isomorphism of vector spaces
\[
\Pt{\mP}_{\le k} \cong \Pto{\hmp}_{k}.
\]
\end{lemma}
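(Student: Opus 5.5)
The isomorphism will come from homogenization and dehomogenization with respect to $x_0$, and the tangency conditions will be checked through \Cref{lem:1tangent}.

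\textbf{Homogenization.} For $\xi=(f_1,\ldots,f_d)\in\Pt{\mP}_{\le k}$, set
\[
\Phi(\xi)=\hxi=(0,\hf_1,\ldots,\hf_d),\qquad \hf_q(x_0,x)=x_0^k f_q(x/x_0).
\]
Each $\hf_q$ is homogeneous of degree $k$; if $f_q=0$ we take $\hf_q=0$.

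\textbf{Dehomogenization.} For $\hxi=(0,g_1,\ldots,g_d)\in\Pto{\hmp}_k$, set
\[
\Psi(\hxi)=(g_1(1,x),\ldots,g_d(1,x)).
\]
Both maps are clearly linear. At the level of polynomial vector fields they are mutually inverse: homogenizing to exact degree $k$ and then setting $x_0=1$ returns the original polynomial. Conversely, a homogeneous polynomial of degree $k$ is recovered from its dehomogenization, which has degree $\le k$, by multiplying out with $x_0^k$. So the only real content is that $\Phi$ and $\Psi$ preserve the respective tangency conditions. By \Cref{cor:hyperplane_test}, the condition on facets is equivalent to tangency to the whole supporting hyperplanes $H_i$ (resp.\ $\hH_i$). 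We may therefore work with hyperplanes throughout and use \Cref{lem:1tangent}.

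\textbf{Key identity.} Since the $0$-th component of $\hxi$ vanishes and $\partial_q\hh_i=(\alpha_i)_q=\partial_q h_i$ for $q\ge1$, we have
\[
\hxi[\hh_i]=\sum_{q=1}^d(\alpha_i)_q\,\hf_q,
\]
which is homogeneous of degree $k$ and equals the homogenization (to degree $k$) of $\xi[h_i]=\sum_q(\alpha_i)_q f_q$.

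\textbf{Forward direction.} If $\xi[h_i]=c\,h_i$, then $\deg c\le k-1$. Homogenizing gives $\hxi[\hh_i]=x_0^{k-1}c(x/x_0)\,\hh_i$, since $\hh_i$ is the degree-one homogenization of $h_i$. Hence $\hxi[\hh_i]\in(\hh_i)$.

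\textbf{Backward direction.} If $\hxi[\hh_i]=\hat c\,\hh_i$, then setting $x_0=1$ gives $\xi[h_i]=\hat c(1,x)\,h_i\in(h_i)$.

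The degenerate case deserves care. If $\hh_i$ is a multiple of $x_0$, then $\alpha_i=0$, which is excluded by \Cref{def:polyhedron}. So each $h_i$ genuinely has degree one and \Cref{lem:1tangent} applies on both sides.

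\textbf{Main obstacle: degree bookkeeping.} The step I expect to be the main obstacle is making sure homogenization commutes with multiplication here. In general $\widehat{ab}=\hat a\hat b$ when degrees are taken exactly, but we homogenize $\xi[h_i]$ to the fixed degree $k$ rather than its true degree. I would handle this by working with $x_0^k F(x/x_0)$ directly as a formal identity in $\R[x_0,x_0^{-1},x]$. Then $x_0^k(c\,h_i)(x/x_0)=\bigl(x_0^{k-1}c(x/x_0)\bigr)\bigl(x_0h_i(x/x_0)\bigr)$, and both factors are polynomials because $\deg c\le k-1$ and $\deg h_i=1$. The case $\xi[h_i]=0$ is trivial.
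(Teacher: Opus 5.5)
Your proof is correct and follows the paper's argument. You homogenize to degree exactly $k$ with vanishing $x_0$-component, use \Cref{lem:1tangent} to write $\sum_q(\alpha_i)_q f_q = h_i g$ with $\deg g\le k-1$ so that the homogenization factors as $\hh_i\, x_0^{k-1}g(x/x_0)$, and dehomogenize at $x_0=1$ for the converse. The differences are cosmetic: the paper checks the converse pointwise on $\hH_i\cap\{x_0=1\}$ rather than via ideal membership, and it leaves implicit the reduction from facets to whole hyperplanes, which you make explicit through \Cref{cor:hyperplane_test}.
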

\begin{proof}
The degree-$k$ homogenization of
$f_q\in \Rx$ with $\deg{f_q}\le k$
is given by
$\hf_q=x_0^k f_q(x_1/x_0,\ldots,x_d/x_0)$
with $\deg{\hf_q}=k$.
Define a map $\Psi$ from  $\Pt{\mP}_{\le k}$ to $\Pto{\hmp}_{k}$ by
\[
(f_1,f_2,\ldots,f_d) \mapsto (0,\hf_1,\hf_2,\ldots,\hf_d).
\]
We show that the image of $\Psi$ is contained in $\Pto{\hmp}_{k}$.
Since $(f_1,f_2,\ldots,f_d)$ is tangent to every $H_i$
for $1\le i\le m$, we have
\[
\sum_{q=1}^d (\alpha_{i})_qf_q(x)=0,\text{ for any } x\in H_i.
\]
By \cref{lem:1tangent}, we may assume that $\sum_{q=1}^d (\alpha_{i})_qf_q=h_i g$ for some $g\in \Rx$.
It follows that
\begin{align*}
\sum_{q=1}^d (\alpha_{i})_q\hf_q
&=\sum_{q=1}^d (\alpha_{i})_q(x_0^kf_q(x_1/x_0,\ldots,x_d/x_0))\\
   &= x_0^k \left(\sum_{q=1}^d (\alpha_{i})_q f_q(x_1/x_0,\ldots,x_d/x_0) \right)\\
& = x_0^kh_i(x_1/x_0,\ldots,x_d/x_0)g(x_1/x_0,\ldots,x_d/x_0)\\
& = \hh_i(x_0,\ldots,x_d) x_0^{k-1}g(x_1/x_0,\ldots,x_d/x_0).
\end{align*}
Since $\deg g\leq k-1$, we deduce that $x_0^{k-1}g(x_1/x_0,\ldots,x_d/x_0)$ is a homogeneous polynomial of degree $k-1$.
This implies that $\sum_{q=1}^d (\alpha_{i})_q\hf_q\in (\hh_i)$ and we have $(0,\hf_1,\hf_2,\ldots,\hf_d)\in \Pto{\hmp}_{k}$.

Conversely,
    define a map $\Phi$ from $\Pto{\hmp}_{k}$ to $\Pt{\mP}_{\le k}$ by
\[
(0,\hf_1,\hf_2,\ldots,\hf_d)\mapsto
(f_1,f_2,\ldots,f_d),
\]
where $\hf_q$ is homogenous polynomial in $\hRx$ with $\deg{\hf_q}=k$, and
$f_q\in \Rx$ with $\deg{f_q}\le k$
are obtained by evaluating $\hf_q$ at $x_0=1$.
Observe that
$(f_1,f_2,\ldots,f_d) \in \Pt{\mP}_{\le k}$.
Note that $(0,\hf_1,\hf_2,\ldots,\hf_d)$ is tangent to every $\hH_i\in \hmp$ for $1\le i\le m$.
Since the normal vector of $\hH_i$ is $\hal_i=(\ell_i,(\alpha_{i})_1,\ldots,(\alpha_{i})_d)$,
it follows that
\[
\sum_{q=1}^d (\alpha_{i})_q \hf_q(\hx)=0,\text{ for any } \hx\in \hH_i.
\]
By restricting at $x_0=1$, we have
\[
\sum_{q=1}^d (\alpha_{i})_qf_q(x)=0,\text{ for any } x\in H_i
\]
and $(f_1,f_2,\ldots,f_d)$ is tangent to every $H_i$.
Since both $\Phi \circ \Psi$ and  $\Psi \circ \Phi$ are the identities, the assertion follows.
\end{proof}

Let $S=\R[x_0,x_1,\ldots,x_{d}]$ be the polynomial ring with $d+1$ variables,
and $S_k$ be its degree $k$ component.
By abuse of notation, we use $(x_0,x_1,\ldots,x_d)$ to denote both a point in $\R^{d+1}$ and the coordinate function.
The space $\Poly{\R^{d+1}}$ of the polynomial vector fields over $\R^{d+1}$ is endowed with the structure of a graded $S$-module.

We now introduce algebraic objects
that will enable an efficient computation of $\Pt{\mP}_{\le k}$.
\begin{definition}\label{def:logarithmic}
Let $\hh_0=x_0$ and
\[
Q_{\hmp}=\prod_{i=0}^m \hh_i
\in S
\]
and define the graded $S$-module
\[
\D(\hmp)=\left\{\xi\in \Poly{\R^{d+1}}\mid \xi[Q_{\hmp}] \in (Q_{\hmp}) \right\}
\]
called the \emph{logarithmic derivation module},
where $(Q_{\hmp})$ is the principal ideal of $S$ generated by $Q_{\hmp}$.
The submodule of $\D(\hmp)$ consisting of those vector fields parallel to the hyperplane $x_0=0$ is denoted by $\Do(\hmp)$:
An element of its degree $k$ component is represented by a tuple
$(0,\hf_1,\hf_2,\ldots,\hf_d)$
of homogeneous degree $k$ polynomials $\hf_q$.
\end{definition}

The simultaneous tangency to multiple hyperplanes
$\{\hH_i\}$ is described as a module membership condition.
\begin{proposition}[{\cite[Chap. 4]{OrlikTerao1992}}]\label{prop:Smodule}
We have
\begin{align*}
  \D(\hmp)
  &=\bigcap_{i=0}^{m} \left\{\xi\in \Poly{\R^{d+1}}\mid \xi[\hh_i] \in (\hh_i) \right\}.
\end{align*}
\end{proposition}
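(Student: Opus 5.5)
The plan is to prove the two inclusions separately, using the Leibniz rule for the derivation $\xi$ together with the fact that the linear forms $\hh_0,\hh_1,\ldots,\hh_m$ are pairwise non-associate irreducibles in the UFD $S$. Non-associateness holds because the representation of $\mP$ is minimal, so no two supporting hyperplanes coincide. The forms $\hh_1,\ldots,\hh_m$ also differ from $\hh_0=x_0$: each $\alpha_i\neq 0$, so no $\hh_i$ with $i\ge 1$ is a multiple of $x_0$. If there is a duplicated hyperplane, the convention would be to keep only one copy in $Q_{\hmp}$.

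For the inclusion ``$\supseteq$'', suppose $\xi[\hh_i]=\hh_i g_i$ for every $i$. By the Leibniz rule,
\[
\xi[Q_{\hmp}]=\sum_{i=0}^m \xi[\hh_i]\prod_{j\neq i}\hh_j=\sum_{i=0}^m g_i\, Q_{\hmp}\in (Q_{\hmp}).
\]
So $\xi\in\D(\hmp)$.

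For the inclusion ``$\subseteq$'', suppose $\xi[Q_{\hmp}]\in(Q_{\hmp})$ and fix an index $i$. The Leibniz expansion gives
\[
\xi[\hh_i]\prod_{j\neq i}\hh_j=\xi[Q_{\hmp}]-\sum_{j\neq i}\xi[\hh_j]\prod_{l\neq j}\hh_l .
\]
Every term on the right lies in $(\hh_i)$: the first because $\hh_i\mid Q_{\hmp}$, and the others because each product over $l\neq j$ contains the factor $\hh_i$. Hence $\hh_i$ divides $\xi[\hh_i]\prod_{j\neq i}\hh_j$. Since $\hh_i$ is prime in $S$ (a nonzero linear form, as in \Cref{lem:1tangent}) and divides none of the $\hh_j$ with $j\neq i$, it must divide $\xi[\hh_i]$. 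Therefore $\xi[\hh_i]\in(\hh_i)$ for every $i$.

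The only delicate point is the coprimality of the $\hh_i$. It needs to be stated explicitly from the minimality assumption in \Cref{def:polyhedron} and from $\alpha_i\ne0$. With that in place, the rest is the routine Leibniz-rule computation above.
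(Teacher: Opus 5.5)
Your proof is correct and is essentially the paper's argument: the Leibniz expansion $\xi[Q_{\hmp}]=\sum_i \xi[\hh_i]\prod_{j\neq i}\hh_j$ gives $\supseteq$, and for $\subseteq$ the primality of $(\hh_i)$ together with $\hh_i\nmid\prod_{j\neq i}\hh_j$ finishes the job; the paper splits off only the $i$-th factor, which amounts to the same thing. You are right to make explicit the coprimality that the paper uses tacitly, but minimality alone does not rule out two opposite half-spaces with the same boundary hyperplane (e.g.\ $\{x\le 0\}\cap\{-x\le 0\}$ in $\R^1$ is a minimal representation), so in that degenerate case it is your ``keep one copy'' convention, not minimality, that makes the $\hh_i$ pairwise non-associate.
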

\begin{proof}
First, we see for any $\xi \in \D(\hmp)$,
it holds that $\xi[\hh_i] \in (\hh_i)$ for any $i$.
As $\xi$ is a derivation,
\[
\xi[Q_{\hmp}]
=\xi\left[\hh_i \cdot \frac{Q_{\hmp}}{\hh_i}\right]
= \xi[\hh_i]\,\frac{Q_{\hmp}}{\hh_i}
+ \xi\!\Bigl[\frac{Q_{\hmp}}{\hh_i}\Bigr]\,\hh_i.
\]
Since $\xi[Q_{\hmp}]\in (Q_{\hmp})\subseteq(\hh_i)$ and obviously $\xi[\frac{Q_{\hmp}}{\hh_i}]\,\hh_i \in (\hh_i)$,
it follows that $\xi[\hh_i]\,\frac{Q_{\hmp}}{\hh_i} \in (\hh_i)$.
As $(\hh_i)$ is prime and 
$\frac{Q_{\hmp}}{\hh_i}\notin(\hh_i)$, 
we have $\xi[\hh_i]\in(\hh_i)$.

Conversely, we show $\xi[Q_{\hmp}]\in (Q_{\hmp})$ if $\xi[\hh_i] \in (\hh_i)$ for any $i$.
We have
\begin{align*}
\xi[Q_{\hmp}]
&=
\xi[\hh_0\cdots \hh_{m-1}\,\hh_{m}] \\
&=\xi[\hh_0\cdots \hh_{m-1}]\,\hh_{m}
 + \xi[\hh_m]\,\hh_0\cdots \hh_{m-1}\\
&=\left( \xi[\hh_0\cdots \hh_{m-2}]\,\hh_{m-1}\hh_{m}
 + \xi[\hh_{m-1}]\,\hh_0\cdots \hh_{m-2}\hh_{m}
 \right)
+ \xi[\hh_m]\,\hh_0\cdots \hh_{m-1}\\
&= \sum_{i=0}^m \frac{\xi[\hh_i]}{\hh_i} Q_{\hmp} \in (Q_{\hmp})
\end{align*}
as required.
\end{proof}

Combining
\cref{embedding,prop:Smodule}, we obtain the following:
\begin{corollary}\label{lem:tangent}
      A vector field $\xi \in \Poly{\R^{d+1}}$ is tangent to
 every $\hH_i\in \hmp$ for $i=0,\ldots,m\ (\ \text{with } \hH_0=\{x_0=0\})$
    if and only if $\xi \in \D(\hmp)$.
    That is, $\Pt{\hmp}=\D(\hmp)$.
    Moreover, $\Pt{\mP}_{\le k}\cong \Pto{\hmp}_k = \Do(\hmp)_k$.
\end{corollary}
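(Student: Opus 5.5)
The plan is to combine \cref{lem:1tangent} with \Cref{prop:Smodule} hyperplane by hyperplane, and then restrict to fields parallel to $\hH_0$ and apply \cref{embedding}.

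\textbf{First claim: $\Pt{\hmp}=\D(\hmp)$.} The hyperplanes $\hH_0,\ldots,\hH_m$ all pass through the origin, so each $\hh_i$ is a nonzero linear form in $S$. Applying \cref{lem:1tangent} in $d+1$ variables, a polynomial vector field $\xi$ on $\R^{d+1}$ is tangent to $\hH_i$ if and only if $\xi[\hh_i]\in(\hh_i)$. Intersecting over $i=0,\ldots,m$ and invoking \Cref{prop:Smodule} gives: $\xi$ is tangent to every $\hH_i$ if and only if $\xi\in\D(\hmp)$.

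\textbf{Identifying the tangent space with the full hyperplane family.} Here we should reconcile $\Pt{\hmp}$, which was defined via tangency on facets, with tangency to the whole hyperplanes $\hH_i$. \Cref{cor:hyperplane_test} shows that tangency on a nonempty open subset of $\hH_i$ is equivalent to tangency on all of $\hH_i$, so the two notions agree.

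One subtlety remains: $\hH_0$ is added artificially and need not carry a facet of $\hmp$. I therefore read the statement as intended, with $\Pt{\hmp}$ meaning fields tangent to all $\hH_i$ for $0\le i\le m$. Under this reading the first assertion is exactly the equivalence above. I expect this bookkeeping to be the only delicate point, since the algebra is already contained in the cited results.

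\textbf{Second claim: $\Pto{\hmp}_k=\Do(\hmp)_k$.} Both sides consist of degree-$k$ homogeneous fields whose $x_0$-component is zero and which lie in $\D(\hmp)$, equivalently are tangent to $\hH_1,\ldots,\hH_m$.

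A field with zero $x_0$-component is automatically tangent to $\hH_0$: the normal of $\hH_0$ is $(1,0,\ldots,0)$, so $\xi[x_0]=0\in(x_0)$. Hence including $\hH_0$ in $Q_{\hmp}$ imposes no extra condition on such fields, and by the first claim the two spaces coincide degree by degree.

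\textbf{Conclusion.} Finally, \cref{embedding} gives $\Pt{\mP}_{\le k}\cong\Pto{\hmp}_k$, which completes the chain of identifications.
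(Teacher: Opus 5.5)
Your proposal is correct and is essentially the paper's argument: the paper obtains this corollary directly by combining \cref{embedding} with \Cref{prop:Smodule}, and the latter rests on the per-hyperplane criterion of \cref{lem:1tangent}, which is exactly the chain you use. Your extra remarks make explicit bookkeeping that the paper leaves implicit: you use \cref{cor:hyperplane_test} to pass from facets to whole hyperplanes, and you note that a field with vanishing $x_0$-component is automatically tangent to $\hH_0$.
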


Let $J(Q_{\hmp})$ be the Jacobian ideal of $Q_{\hmp}$ in $S$, which is generated by the partial derivatives $\partial_q Q_{\hmp}=\dfrac{\partial Q_{\hmp}}{\partial x_q}$ for $0\le q\le d$.
\begin{lemma}
When $\bigcap_i\hH_i=\{0\}$,
    the set $\{\partial_q Q_{\hmp}\mid 0\le q \le d\}$ forms a minimal generating set of $J(Q_{\hmp})$.
    By abuse of notation, we denote 
    the tuple 
    $\left(\partial_0 Q_{\hmp},\partial_1 Q_{\hmp},\ldots,\partial_d Q_{\hmp}\right)$ also by $J(Q_{\hmp})$.
\end{lemma}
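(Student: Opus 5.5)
Since $J(Q_{\hmp})$ is by definition generated by the $d+1$ partial derivatives, the only content of the statement is \emph{minimality}: no $\partial_q Q_{\hmp}$ can be omitted. The plan is to reduce this to a linear-algebra fact about homogeneous components. Write $n=m+1$ for the number of factors, so $Q=Q_{\hmp}$ is homogeneous of degree $n$ and every $\partial_q Q$ is homogeneous of degree $n-1$ (or zero). In a graded ideal generated by homogeneous elements of a single degree $n-1$, a generating set of that degree is minimal exactly when it is linearly independent over $\R$. Indeed, if $\partial_p Q=\sum_{q\ne p} c_q\,\partial_q Q$ with $c_q\in S$, we may take degree-$(n-1)$ parts; only the constant terms of the $c_q$ contribute. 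So it suffices to show that $\partial_0 Q,\ldots,\partial_d Q$ are $\R$-linearly independent. Note also that $Q\neq 0$.

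Suppose $\sum_q c_q\,\partial_q Q=0$ with $c=(c_0,\ldots,c_d)\in\R^{d+1}$ not all zero. Then the constant-coefficient derivation $\theta=\sum c_q\partial_q$ satisfies $\theta[Q]=0$. We use the product rule exactly as in the proof of \Cref{prop:Smodule}:
\[
\theta[Q]=\sum_{i=0}^m \theta[\hh_i]\,\frac{Q}{\hh_i},
\]
where each $\theta[\hh_i]=\langle \hal_i, c\rangle$ is a constant, with $\hal_0=(1,0,\ldots,0)$. Fix $i$ and reduce modulo $\hh_i$. Every term with $j\ne i$ contains the factor $\hh_i$, so we obtain $\langle\hal_i,c\rangle\,Q/\hh_i\in(\hh_i)$.

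The key point is that $Q/\hh_i\notin(\hh_i)$. This requires $\hh_i$ not to be proportional to any other $\hh_j$, i.e.\ the hyperplanes $\hH_0,\ldots,\hH_m$ are distinct. For $i,j\ge 1$ this follows from the minimality assumption in \Cref{def:polyhedron}, since a repeated or oppositely oriented copy would either be redundant or make $\mP$ lower dimensional; I would spell this out carefully. For $j=0$, note that $\hh_i$ is never a multiple of $x_0$ because $\alpha_i\neq0$. This distinctness was already tacitly used in \Cref{prop:Smodule}. Granting it, primality of $(\hh_i)$ gives $\langle\hal_i,c\rangle=0$ for all $i$. Hence $c$ lies in $\bigcap_i \hH_i=\{0\}$, a contradiction.

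The main obstacle is the distinctness of the hyperplanes. One could avoid it by defining $Q$ from the set of distinct hyperplanes, since $\D$ depends only on the reduced form. Short of that, I would prove from the irredundancy condition that no two $\hh_i$ are proportional. The graded-minimality reduction is routine; I would state it with the observation that the degree-$(n-1)$ part of $c_q\,\partial_q Q$ is $c_q(0)\,\partial_q Q$.
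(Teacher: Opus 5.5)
There is a gap, and it sits in the step you flag as the ``main obstacle''. Your reduction of minimality to $\R$-linear independence is correct, and the paper leaves it implicit. Your product-rule/primality mechanism is also sound. The problem is that distinctness of the $\hH_i$ does \emph{not} follow from the irredundancy condition in \Cref{def:polyhedron}.

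That definition allows lower-dimensional $\mP$, and then an antiparallel pair $h\le 0$, $-h\le 0$ is irredundant. For instance, $\mP=\{x_1\le 0\}\cap\{-x_1\le 0\}=\{0\}\subset\R^1$ is a minimal representation with $\bigcap_i\hH_i=\{0\}$. Yet $Q_{\hmp}=-x_0x_1^2$ and $Q_{\hmp}/\hh_1=-x_0x_1\in(\hh_1)$, so your key claim $Q/\hh_i\notin(\hh_i)$ fails.

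Your fallback of replacing $Q_{\hmp}$ by its reduced form changes the ideal: $J(-x_0x_1^2)=(x_1^2,x_0x_1)$, whereas $J(x_0x_1)=(x_0,x_1)$. So it does not prove the lemma as stated. The fact that \Cref{prop:Smodule} tacitly uses the same assumption is not a justification either.

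You can repair this inside your framework by grouping proportional factors. If $\hh_j=\lambda\hh_i$, then $\langle\hal_j,c\rangle\,Q/\hh_j=\langle\hal_i,c\rangle\,Q/\hh_i$, so these terms coincide. They combine to $\mu_i\langle\hal_i,c\rangle\,Q/\hh_i$, where $\mu_i$ is the number of factors proportional to $\hh_i$. Every remaining term lies in $(\hh_i^{\mu_i})$. Reduce modulo $\hh_i^{\mu_i}$ instead of $\hh_i$, and use that $Q/\hh_i$ is divisible by $\hh_i^{\mu_i-1}$ but not by $\hh_i^{\mu_i}$. This gives $\mu_i\langle\hal_i,c\rangle=0$, hence $\langle\hal_i,c\rangle=0$ in characteristic zero.

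The paper avoids the issue entirely with a geometric argument. The relation $\sum_q c_q\partial_qQ=0$ says $Q$ is constant along every line parallel to $c$. Since $c\neq 0$ and $\bigcap_i\hH_i=\{0\}$, some $\hH_i$ does not contain $c$, so $\hH_i+\R c=\R^{d+1}$. Then $Q|_{\hH_i}=0$ forces $Q\equiv 0$.

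That argument needs only one hyperplane avoiding $c$ and is insensitive to repeated factors. Your route yields the stronger intermediate fact $c\in\bigcap_i\hH_i$, but only after the multiplicity bookkeeping above.
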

\begin{proof}
Assume, toward a contradiction, that there is a nonzero vector
$c=(c_0,\ldots,c_d)\in\R^{d+1}$ such that
\[
  \sum_{q=0}^d c_q\,\partial_q Q_{\hmp} 
  = \langle \nabla Q_{\hmp}, c\rangle
  = 0.
\]
Equivalently, the directional derivative $D_c(Q_{\hmp})$ vanishes identically.    Because $\bigcap_i\hH_i=\{0\}$, there must be some $\hH_i$ that does not contain $c\in \R^{d+1}$.
    As $Q_{\hmp}(x)=0$ for any $x\in \widehat{H}_i$,
    and $\hH_i$ and $c$ span the whole $\R^{d+1}$,
    we have $Q_{\hmp}\equiv 0$, which is a contradiction.
\end{proof}

Consider a graded $S$-free resolution
(see \cite[Chap. 1]{Eis-syzygy} for the basics of free resolutions)
\begin{equation}\label{eq:syz2}
0\to N_{d+1}\xrightarrow{\varphi_{d+1}} \cdots \xrightarrow{\varphi_4} N_3 \xrightarrow{\varphi_3} N_2 \xrightarrow{\varphi_2} S^{d+1} \xrightarrow{\varphi_1} S \to S/J(Q_{\hmp})\to 0,
\end{equation}
where $\varphi_1$ maps the generators of $S^{d+1}$ to $\partial_q Q_{\hmp}$ for $0\le q\le d$.
Recall that $N_{p}=0$ for $p>d+1$ by Hilbert's syzygy theorem and $\ker\varphi_1$ is called the syzygy module of $J(Q_{\hmp})$, and denoted by $\syz(J(Q_{\hmp}))$.
More concretely,
\begin{equation}\label{eq:syz2mod}
    \syz(J(Q_{\hmp})) = \left\{
[\hf_0,\ldots,\hf_d]\in S^{d+1} \middle|
[\hf_0,\ldots,\hf_d]\cdot J(Q_{\hmp}):=
\sum_{q=0}^d \hf_q \partial_q Q_{\hmp} = 0
\right\}.
\end{equation}
Its degree-$k$ component is denoted by $\syz(J(Q_{\hmp}))_k$.

\begin{remark}
    Note that we represent elements both in $\Do(\hmp)$ and
$\syz(J(Q_{\hmp}))$ by $(d+1)$-tuples of polynomials in $S$, but with different parentheses to avoid confusion.
\end{remark}

There exists a ``radial'' field that is
tangent to any cone.
This special field plays a role in proving our main theorem.
\begin{lemma}\label{euler}
Let $\xi_E = (x_0,x_1,\ldots,x_d)\in \Poly{\R^{d+1}}$.
We have
\[
\xi_E[\hf]=\deg(\hf)\hf
\]
for any homogeneous polynomial $\hf\in S$.
In particular,  $\xi_E \in \Pt{\hmp}$
for any $\mP$.
\end{lemma}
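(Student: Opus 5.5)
The proof has two parts: first verify Euler's identity on monomials and extend by linearity, then specialise it to the linear forms $\hh_i$ and apply \Cref{prop:Smodule}.

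\textbf{Euler's identity.} By \Cref{eq:action},
\[
\xi_E[\hf]=\sum_{q=0}^d x_q\,\partial_q \hf .
\]
This expression is $\R$-linear in $\hf$. Every homogeneous polynomial of degree $k$ is an $\R$-linear combination of monomials of degree $k$, so it suffices to treat a single monomial $x^a=x_0^{a_0}\cdots x_d^{a_d}$ with $\sum_q a_q=k$.

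For such a monomial, $x_q\partial_q x^a = a_q x^a$; this is trivially true when $a_q=0$. Summing over $q$ gives
\[
\xi_E[x^a]=\Bigl(\sum_{q=0}^d a_q\Bigr)x^a = k\,x^a .
\]
Linearity then yields $\xi_E[\hf]=\deg(\hf)\,\hf$ for every homogeneous $\hf\in S$. The zero polynomial needs no separate treatment, since both sides vanish for any convention on its degree.

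\textbf{Tangency.} Each $\hh_i$ for $0\le i\le m$ (including $\hh_0=x_0$) is a linear form, hence homogeneous of degree one. Euler's identity gives $\xi_E[\hh_i]=\hh_i\in(\hh_i)$ for every $i$.

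By \Cref{prop:Smodule}, this places $\xi_E$ in $\D(\hmp)$. By \Cref{lem:tangent}, $\D(\hmp)=\Pt{\hmp}$, so $\xi_E\in\Pt{\hmp}$.

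Alternatively, one can check membership directly: $Q_{\hmp}$ is homogeneous of degree $m+1$, so $\xi_E[Q_{\hmp}]=(m+1)Q_{\hmp}\in(Q_{\hmp})$, which places $\xi_E$ in $\D(\hmp)$ by \Cref{def:logarithmic}. Nothing in the argument depends on the particular polytope $\mP$, so the claim holds for any $\mP$.

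There is no real obstacle. The only point needing care is that the tangency statement concerns all hyperplanes, including the added $\hH_0$. This is automatic because $\hh_0$ is also a linear form, so the same one-line computation covers it.
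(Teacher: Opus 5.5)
Your proof is correct and follows the paper's argument: you verify Euler's identity on monomials, extend it by linearity, and then apply it to the linear forms $\hh_i$ (including $\hh_0=x_0$) to get $\xi_E[\hh_i]=\hh_i\in(\hh_i)$. The only cosmetic difference is that the paper concludes tangency directly from \Cref{lem:1tangent} hyperplane by hyperplane. You instead pass through \Cref{prop:Smodule} and \Cref{lem:tangent}, or use the one-line check $\xi_E[Q_{\hmp}]=(m+1)Q_{\hmp}$, which amounts to the same thing.
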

\begin{proof}
    The first assertion is trivial for monomials,
    and the general case follows from linearity.
The second assertion is a consequence of the first and
    \Cref{lem:1tangent}.
\end{proof}

The following isomorphism is essential to our scheme.
\begin{theorem}\label{thm:poly-syz}
We have an isomorphism of $S$-modules:
\[
\Pt{\mP}\cong \syz(J(Q_{\hmp})).
\]
In particular,
we have an isomorphism of $\R$-vector spaces:
\[
\Pt{\mP}_{\le k} \cong \syz(J(Q_{\hmp}))_k.
\]
\end{theorem}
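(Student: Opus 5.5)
Since \Cref{lem:tangent} already gives $\Pt{\mP}_{\le k}\cong\Do(\hmp)_k$, the plan is to prove $\Do(\hmp)\cong\syz(J(Q_{\hmp}))$ as graded $S$-modules, degree by degree. The key identity comes from \Cref{euler}: with $n=m+1=\deg Q_{\hmp}$, we have $\xi_E[Q_{\hmp}]=nQ_{\hmp}$. Hence for any $\xi\in\D(\hmp)$ with $\xi[Q_{\hmp}]=gQ_{\hmp}$, the field $\xi-\tfrac{g}{n}\xi_E$ is annihilated by $Q_{\hmp}$. Reading a field as a tuple and $\xi[Q_{\hmp}]$ as the dot product with $J(Q_{\hmp})$, this field is therefore a syzygy. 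This gives the decomposition $\D(\hmp)=S\xi_E\oplus\syz(J(Q_{\hmp}))$. The sum is direct because $\xi_E[Q_{\hmp}]=nQ_{\hmp}\neq0$, so $a\xi_E$ is a syzygy only if $a=0$. Also $\syz(J(Q_{\hmp}))\subseteq\D(\hmp)$ trivially, since $0\in(Q_{\hmp})$.

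The other decomposition is $\D(\hmp)=S\xi_E\oplus\Do(\hmp)$. Because $\hH_0=\{x_0=0\}$ belongs to the arrangement, any $\xi=(f_0,\dots,f_d)\in\D(\hmp)$ satisfies $f_0=\xi[x_0]\in(x_0)$, say $f_0=x_0a$. Then $\xi-a\xi_E$ has zero first coordinate and remains in $\D(\hmp)$, since $\xi_E\in\D(\hmp)$. The sum is direct because the first coordinate of $a\xi_E$ is $ax_0$, which vanishes only when $a=0$.

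From the two decompositions, I get the isomorphism $\Do(\hmp)\cong\D(\hmp)/S\xi_E\cong\syz(J(Q_{\hmp}))$. Explicitly, the map is
\[\Theta:\Do(\hmp)\to\syz(J(Q_{\hmp})),\qquad \xi\mapsto \xi-\tfrac{\xi[Q_{\hmp}]}{nQ_{\hmp}}\,\xi_E,\]
with inverse $\eta=[\hf_0,\dots,\hf_d]\mapsto\eta-\tfrac{\hf_0}{x_0}\xi_E$. I would check that $\Theta$ is $S$-linear and degree preserving. Degree preservation holds because $\xi_E$ has degree $1$ and the quotient $\xi[Q_{\hmp}]/Q_{\hmp}$ has degree $k-1$ when $\xi$ has degree $k$. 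I would also verify directly that the two maps compose to the identity, using the fact that each map adds or removes only a multiple of $\xi_E$. Combining this with \Cref{lem:tangent} gives the degree-$k$ statement. The module statement follows by assembling over all $k$, with $\Pt{\mP}$ identified with $\Do(\hmp)$ via homogenization.

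The main obstacle is bookkeeping rather than mathematics, in two places. First, the notation mixes ``$\Pt{\mP}$'' as an $\Rx$-module with an $S$-module structure. I would define the $S$-action by transport through the isomorphism, with $x_0$ acting as the degree shift $\Pt{\mP}_{\le k}\hookrightarrow\Pt{\mP}_{\le k+1}$, so that the isomorphism is with the graded object $\bigoplus_k\Pt{\mP}_{\le k}$. Second, I need to confirm that dividing by $x_0$ in the inverse map is legitimate, i.e.\ that $\hf_0\in(x_0)$ for every syzygy. This holds because syzygies lie in $\D(\hmp)$, which forces tangency to $\hH_0$. Finally, the degree-$0$ case must be handled correctly: $\syz(J(Q_{\hmp}))_0$ consists of the constant vectors $c$ with $D_cQ_{\hmp}=0$.
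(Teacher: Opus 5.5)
Your proof is correct and is essentially the paper's: your $\Theta$ and its inverse $\eta\mapsto\eta-\tfrac{\hf_0}{x_0}\xi_E$ are exactly the paper's maps $\phi$ and $\psi$, and the two decompositions $\D(\hmp)=S\xi_E\oplus\syz(J(Q_{\hmp}))=S\xi_E\oplus\Do(\hmp)$ only repackage the same verification more structurally. Your reading of the $S$-module claim is a worthwhile precision that the paper leaves implicit: it should be taken for the graded object $\bigoplus_k\Pt{\mP}_{\le k}$, with $x_0$ acting as the inclusion $\Pt{\mP}_{\le k}\hookrightarrow\Pt{\mP}_{\le k+1}$.
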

\begin{proof}
By \Cref{lem:tangent}, it therefore suffices to prove that the map
 $$\phi: \Do(\hmp) \to \syz(J(Q_{\hmp})),$$
defined by
\[
\phi(\xi) = [0,\hf_1,\ldots,\hf_d]-\dfrac{
\xi[Q_{\hmp}]
}{\deg(Q_{\hmp})Q_{\hmp}} [x_0,x_1,\ldots,x_d],
\]
is an isomorphism of $\R$-vector spaces,
where $\xi = (0,\hf_1,\ldots,\hf_d)\in \Do(\hmp)$.


By the definition of the action of a vector field
\Cref{eq:action},
we have
\[
\xi[Q_{\hmp}]=
\sum_{q=1}^d \hf_q \partial_q Q_{\hmp},
\]
which is divisible by $Q_{\hmp}$
by \Cref{def:logarithmic}.
Also, we have 
\[
[0,\hf_1,\ldots,\hf_d]\cdot J(Q_{\hmp})=\sum_{q=1}^d \hf_q \partial_q Q_{\hmp}=\xi[Q_{\hmp}].
\]

We have
$[x_0,x_1,\ldots,x_d]\cdot J(Q_{\hmp})=
\sum_{q=0}^d x_q \partial_q Q_{\hmp}=\deg(Q_{\hmp})Q_{\hmp}$
by \Cref{euler}.
Therefore,
$\phi(\xi)\cdot J(Q_{\hmp})=0$. By \Cref{eq:syz2mod}, it follows that $\phi(\xi)\in \syz(J(Q_{\hmp}))$,
and the map $\phi$ is well-defined.

Conversely, 
note first that if $\hgg=[\hg_0,\ldots,\hg_d]\in \syz(J(Q_{\hmp}))$, then
\[
(\hg_0,\ldots,\hg_d)[Q_{\hmp}]=\sum_{q=0}^d \hg_q\,\partial_q Q_{\hmp}=0\in (Q_{\hmp}),
\]
so $(\hg_0,\ldots,\hg_d)\in \D(\hmp)$.

Applying \Cref{prop:Smodule} to $(\hg_0,\ldots,\hg_d)\in\D(\hmp)$ yields
$(\hg_0,\ldots,\hg_d)[x_0]=\hg_0\in (x_0)$,
so that $\dfrac{\hg_0}{x_0}\in S$.
Therefore, we may define a map
 $$\psi:  \syz(J(Q_{\hmp}))   \to  \Do(\hmp),$$
 by setting
\[
\psi(\hgg)
=(\hg_0,\ldots,\hg_d) - \dfrac{\hg_0}{x_0}\xi_E
\]
for any $\hgg=[\hg_0,\ldots,\hg_d]\in \syz\left(J(Q_{\hmp})\right)$.
Indeed,
\[
\psi(\hgg)[Q_{\hmp}]=(\hg_0,\ldots,\hg_d)[Q_{\hmp}]-\dfrac{\hg_0}{x_0}\xi_E[Q_{\hmp}]=-\dfrac{\hg_0}{x_0}\deg(Q_{\hmp})Q_{\hmp}\in (Q_{\hmp}).
\]
Since the first coordinate of $\dfrac{\hg_0}{x_0}\xi_E$
is $\hg_0$, 
we have $\psi(\hgg)_0=0$
and $\psi(\hgg)\in \Do(\hmp)$.

Finally, we verify $\psi\circ \phi$ is the identity as follows:
\begin{align*}
    \psi\circ \phi(\xi) & = 
    \psi\left([0,\hf_1,\ldots,\hf_d]-\dfrac{
\xi(Q_{\hmp})
}{\deg(Q_{\hmp})Q_{\hmp}} [x_0,x_1,\ldots,x_d]
\right)\\
&= 
\left(
(0,\hf_1,\ldots,\hf_d)-\dfrac{
\xi(Q_{\hmp})
}{\deg(Q_{\hmp})Q_{\hmp}} (x_0,x_1,\ldots,x_d)
\right)
-
\dfrac{
-\xi(Q_{\hmp})
}{\deg(Q_{\hmp})Q_{\hmp}x_0} x_0 \xi_E \\
&= \xi.
\end{align*}
Similarly, we can easily verify 
$\phi\circ \psi$ is the identity.
\end{proof}

We emphasize that the elements of $\Pt{\mP}_{\le k}$ that are vector fields with degree \emph{at most} $k$ correspond to the elements of
the \emph{homogeneous} degree $k$ component of $\syz(J(Q_{\hmp}))$.

This theorem identifies
the space $\Pt{\mP}_{\le k}$ with
the syzygy module of the Jacobian ideal of $Q_{\hmp}$,
 which is computable using Gr\"obner basis by
 Schreyer's algorithm~\cite{Schreyer1991}.
 This enables us to obtain a basis of $\Pt{\mP}_{\le k}$.
\begin{theorem}[{Schreyer's theorem on syzygies (e.g., \cite[Thm.~15.10]{Eisenbud})}]\label{thm:schreyer}
Let \(G = \{g_1,\dots,g_r\}\) be a Gr\"obner basis of a homogeneous ideal \(I \subset \R[x_1,\dots,x_d]\) with respect to a term order \(>\).
For each \(i\), let
\[
\text{lt}(g_i) = x^{\alpha_i}
\]
be the leading term of \(g_i\).
Then the module of syzygies \(\syz(G)\) is generated by the \emph{Schreyer syzygies}
\[
s_{ij} := \frac{\operatorname{lcm}(x^{\alpha_i}, x^{\alpha_j})}{x^{\alpha_j}} e_j
- \frac{\operatorname{lcm}(x^{\alpha_i}, x^{\alpha_j})}{x^{\alpha_i}} e_i
+ \sum_{k} c_{ijk} e_k,
\]
where the coefficients \(c_{ijk}\) arise from the remainder of the $S$-polynomial reduction of \((g_i, g_j)\) with respect to \(G\):
\[
S(g_i, g_j) = \sum_k c_{ijk} g_k.
\]
Moreover, the Schreyer order induced by \(>\) ensures that \(\{s_{ij}\}\) forms a Gr\"obner basis of the syzygy module.
\end{theorem}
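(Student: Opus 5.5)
The plan is to prove Schreyer's theorem by the standard argument in \cite[Thm.~15.10]{Eisenbud}: first check that each $s_{ij}$ is a syzygy, then show that the $s_{ij}$ form a Gr\"obner basis of $\syz(G)$ with respect to the Schreyer order. Generation of the syzygy module then follows from the Gr\"obner property.

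\textbf{Setup and the syzygy property.} Fix the free module $\bigoplus_i S e_i$ with the map $e_i\mapsto g_i$. Define the Schreyer order on monomials $x^\beta e_i$ by
\[
x^\beta e_i > x^\gamma e_j \iff \text{either } \text{lt}(x^\beta g_i) > \text{lt}(x^\gamma g_j), \text{ or they are equal and } i<j.
\]
Write $m_{ij}=\operatorname{lcm}(x^{\alpha_i},x^{\alpha_j})$. Because $G$ is a Gr\"obner basis, Buchberger's criterion says that $S(g_i,g_j)$ reduces to zero modulo $G$. This gives a standard representation
\[
S(g_i,g_j)=\sum_k c_{ijk} g_k \quad\text{with}\quad \text{lt}(c_{ijk}g_k)\le \text{lt}(S(g_i,g_j))<m_{ij}.
\]
With the sign convention adjusted so that the $S$-polynomial terms cancel, $s_{ij}$ maps to $S(g_i,g_j)-\sum_k c_{ijk}g_k=0$, so every $s_{ij}\in\syz(G)$.

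\textbf{Leading terms.} Next I determine the leading term of $s_{ij}$ for $i<j$. The two terms $(m_{ij}/x^{\alpha_j})e_j$ and $(m_{ij}/x^{\alpha_i})e_i$ both have image leading monomial $m_{ij}$. The terms $c_{ijk}e_k$ have strictly smaller image leading monomial. By the tie-break $i<j$, the leading term of $s_{ij}$ is therefore $(m_{ij}/x^{\alpha_i})e_i$, up to sign.

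\textbf{The Gr\"obner property (main step).} I take an arbitrary syzygy $\sigma=\sum_i u_ie_i$ and show that its leading term is divisible by some $\text{lt}(s_{ij})$. Let $\text{lt}(\sigma)=c\,x^\beta e_i$. Collect all indices $j$ for which the term $\text{lt}(u_j)e_j$ has the same image leading monomial $x^\beta x^{\alpha_i}$. By the choice of $i$ in the order, all such $j$ satisfy $j\ge i$. These top-degree image terms must cancel, since $\sigma$ is a syzygy. If $i$ were the only index in this set, the term $c\,x^{\beta+\alpha_i}$ could not cancel, so some $j>i$ must also lie in the set. Then $x^{\alpha_j}$ divides $x^{\beta+\alpha_i}$, so $m_{ij}$ divides $x^{\beta+\alpha_i}$, and hence $m_{ij}/x^{\alpha_i}$ divides $x^\beta$. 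Thus $\text{lt}(s_{ij})$ divides $\text{lt}(\sigma)$. The part I expect to need the most care is keeping this bookkeeping precise, namely that the maximal term in the Schreyer order really corresponds to the smallest index among the competing terms.

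\textbf{Generation.} Finally, I use the division algorithm in the free module with respect to the Schreyer order. Since the leading term of any nonzero syzygy is always divisible by some $\text{lt}(s_{ij})$, subtracting a suitable monomial multiple of $s_{ij}$ strictly lowers the leading term. The order is a well-order, so this process terminates. Any syzygy therefore reduces to $0$, and hence lies in the submodule generated by the $s_{ij}$. This shows both that the $s_{ij}$ generate $\syz(G)$ and that they form a Gr\"obner basis of it.
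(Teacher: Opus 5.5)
Your proposal is correct and follows the standard proof of \cite[Thm.~15.10]{Eisenbud}, which is all the paper offers here: it cites the result and gives no proof of its own. Your steps are the same as in that reference: a standard representation of $S(g_i,g_j)$, the leading term $(m_{ij}/x^{\alpha_i})e_i$ for $i<j$ via the index tie-break, cancellation of the top image monomial forcing a tied index $j>i$, and division in the free module under the Schreyer order. One small attribution point: that $S(g_i,g_j)$ reduces to zero modulo $G$ is the easy consequence of $G$ being a Gr\"obner basis, not Buchberger's criterion itself, and the homogeneity hypothesis is never used.
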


With Schreyer's algorithm, one can compute a free resolution \eqref{eq:syz2} to determine
\( N_i = \bigoplus_j S(-a_{i,j}) \).
From
\[
0\to N_{d+1}\xrightarrow{\varphi_{d+1}} \cdots \xrightarrow{\varphi_4} N_3 \xrightarrow{\varphi_3} N_2 \xrightarrow{\varphi_2} \syz(J(Q_{\hmp}))\to 0
\]
we deduce
\[
\dim_{\R} \syz(J(Q_{\hmp}))_k 
= \sum_{i=2}^{d+1} (-1)^i \sum_j 
\binom{d + 1 + k - a_{i,j}}{d + 1}
\]
by \cite[Ch.~1,~§1]{Eis-syzygy}.
Hence, by \Cref{thm:poly-syz} we have
\begin{proposition}
\[
\dim \Pt{\mP}_{\leq k}=
\sum_{i=2}^{d+1} (-1)^i \sum_j 
\binom{d + 1 + k - a_{i,j}}{d + 1}.
\]
\end{proposition}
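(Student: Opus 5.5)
The plan is to combine the vector-space isomorphism of \Cref{thm:poly-syz} with an additivity-of-dimension argument along the graded free resolution \eqref{eq:syz2}. By \Cref{thm:poly-syz} we have $\dim_{\R}\Pt{\mP}_{\le k}=\dim_{\R}\syz(J(Q_{\hmp}))_k$. So it suffices to compute the Hilbert function of the syzygy module in degree $k$, with the grading fixed so that the degree-$k$ part is exactly what \Cref{thm:poly-syz} identifies with $\Pt{\mP}_{\le k}$. The key point is to use the same grading convention on $\syz(J(Q_{\hmp}))$ as on the free modules $N_i=\bigoplus_j S(-a_{i,j})$. That is, a syzygy $[\hf_0,\ldots,\hf_d]$ with all $\hf_q$ homogeneous of degree $k$ should lie in degree $k$, and the shifts $a_{i,j}$ must be measured with respect to this convention.

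First, truncate \eqref{eq:syz2} at $S^{d+1}$. Exactness at $S^{d+1}$ gives $\operatorname{im}\varphi_2=\ker\varphi_1=\syz(J(Q_{\hmp}))$. This yields the exact sequence
\[
0\to N_{d+1}\to\cdots\to N_2\to \syz(J(Q_{\hmp}))\to 0
\]
of graded $S$-modules with degree-preserving maps. Such a sequence remains exact after restricting to each graded piece, giving a finite exact sequence of finite-dimensional $\R$-vector spaces in degree $k$. Taking the alternating sum of dimensions gives
\[
\dim\syz(J(Q_{\hmp}))_k=\sum_{i\ge 2}(-1)^i\dim (N_i)_k .
\]

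Next, compute each $\dim(N_i)_k$. We have $(N_i)_k=\bigoplus_j S_{k-a_{i,j}}$, and a polynomial ring in $d+1$ variables has $\dim S_n=\binom{n+d}{d}$ for $n\ge0$ and $0$ otherwise. To match the stated binomial $\binom{d+1+k-a_{i,j}}{d+1}$, which counts monomials of degree $\le k-a_{i,j}$ in $d+1$ variables, I would reinterpret the counting in the affine model. There $S(-a)_k$ corresponds, via dehomogenization at $x_0=1$ as in \Cref{embedding}, to polynomials of degree at most $k-a$ in the remaining variables. Equivalently, I would adopt the convention of \cite[Ch.~1,~§1]{Eis-syzygy} cited just before the statement and substitute the resulting closed form. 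Summing over $j$ and $i$ then yields the displayed formula.

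The main obstacle is this bookkeeping of gradings and binomial coefficients. One must check that the index shift between ``homogeneous of degree $k$'' and ``degree $\le k$'' is applied consistently. Otherwise the binomial comes out as $\binom{k-a+d}{d}$ rather than $\binom{d+1+k-a}{d+1}$, which would indicate an off-by-one in the convention for the $a_{i,j}$. I would verify the convention on a small case, such as $m=0$ (so $\mP=\R^d$), where $\Pt{\mP}_{\le k}=\Poly{\R^d}_{\le k}$ has dimension $d\binom{d+k}{d}$, and fix the interpretation of $a_{i,j}$ so that the formula reproduces this value.
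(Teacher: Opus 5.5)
Your reduction is the same as the paper's. \Cref{thm:poly-syz} turns the left-hand side into $\dim_{\R}\syz(J(Q_{\hmp}))_k$, and additivity of dimension along \eqref{eq:syz2}, truncated at $N_2$, gives $\dim_{\R}\syz(J(Q_{\hmp}))_k=\sum_{i\ge 2}(-1)^i\sum_j\dim_{\R}S_{k-a_{i,j}}$. Your grading remark is correct and needed: $\varphi_1$ raises degree by $m=\deg\partial_qQ_{\hmp}$, while \eqref{eq:syz2} writes $S^{d+1}$ unshifted.

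The step that fails is the last one, where you try to turn $\dim_{\R}S_{k-a}=\binom{d+k-a}{d}$ into $\binom{d+1+k-a}{d+1}$. Neither of your two routes does this:
\begin{itemize}
\item Dehomogenizing at $x_0=1$ identifies $S_{k-a}$ with polynomials of degree $\le k-a$ in the $d$ variables $x_1,\ldots,x_d$, not $d+1$. That count is again $\binom{d+k-a}{d}$.
\item The formula of \cite[Ch.~1,~\S1]{Eis-syzygy}, stated for $\R[x_0,\ldots,x_r]$, reads $\binom{r+k-a_{i,j}}{r}$, and here $r=d$.
\end{itemize}
No convention for the shifts can close the gap either. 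Since $\sum_{i\ge2}(-1)^i\operatorname{rank}N_i=\operatorname{rank}\syz(J(Q_{\hmp}))=d$, the printed right-hand side is, for large $k$, a polynomial in $k$ with leading term $d\,k^{d+1}/(d+1)!$. By contrast, $\dim\Pt{\mP}_{\le k}\le\dim\Poly{\R^d}_{\le k}=d\binom{k+d}{d}$ grows only like $k^d$.

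So the displayed identity is false as printed; the paper's one-line derivation quotes the same Hilbert-function formula and has the same off-by-one. Your plan to calibrate the $a_{i,j}$ on a test case therefore cannot succeed, and the test case is in fact a counterexample:
\begin{itemize}
\item Your own $m=0$ check: $\syz(J(Q_{\hmp}))=\{[0,\hf_1,\ldots,\hf_d]\}\cong S^d$ forces $a_{2,j}=0$. The printed formula then returns $d\binom{d+1+k}{d+1}$, which differs from $d\binom{d+k}{d}$ for every $k\ge1$.
\item A non-degenerate example: $d=1$, $\mP=\{x_1\le 0\}$. Here $Q_{\hmp}=x_0x_1$ and $\syz(J(Q_{\hmp}))=S\cdot[x_0,-x_1]$, so $N_2=S(-1)$ and $N_i=0$ for $i\ge 3$. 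Then $\dim\Pt{\mP}_{\le k}=k$ (spanned by $x_1,\ldots,x_1^k$), but the printed formula gives $\binom{k+1}{2}$.
\item The paper's pentagon: five generators of degree $4$ together with $\dim\Pt{\mP}_{\le 5}=12$ force exactly three minimal relations of degree $5$. The printed formula would give $5\binom{4}{3}-3=17$ at $k=5$.
\end{itemize}
What your argument actually proves is
\[
\dim\Pt{\mP}_{\le k}=\sum_{i=2}^{d+1}(-1)^i\sum_j\binom{d+k-a_{i,j}}{d},
\]
with $\binom{n}{d}:=0$ for $n<d$ and the $a_{i,j}$ taken in your grading. This returns $k$ and $12$ in the last two examples. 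You should stop there and record that the proposition needs this correction, rather than reinterpret the binomial to match the printed statement.
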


\begin{example}
Consider the convex polytope \( \mP \subset \R^2\) defined by the linear equations
\[
x = 0, \qquad y = 0, \qquad x + y + 1 = 0.
\]
The defining polynomial of its cone is
\[
Q_{\hmp} = xyz(x + y + z),
\]
where we use $x=x_1, y=x_2, z=x_0$ for simplicity.

Let \(H\) denote the hyperplane \(x + y + z = 0\)
and consider polynomial vector field parallel to $H$:
\[
\D_H(\hmp)=\left\{ \xi\in \D(\hmp) \mid \xi[x+y+z]=0 \right\}.
\]
Since they are isomorphic by an appropriate change of coordinates,
we consider $\D_H(\hmp)$ instead of $\Do(\hmp)$.

For any 
\(\xi = [f,g,h] \in \D_H(\hmp)\),
the tangency conditions to the coordinate hyperplanes imply
\[
\xi[x] \in (x), 
\qquad 
\xi[y] \in (y), 
\qquad 
\xi[z] \in (z),
\]
so there exist polynomials \(f', g', h' \in S\) such that
\[
f = x f', \qquad g = y g', \qquad h = z h',
\]
where \( S = \mathbb{R}[x, y, z] \).
The parallel condition
\(
\xi[x + y + z] = 0
\)
gives the relation
\begin{align}
  x f' + y g' + z h' = 0.\label{eq-f1g1h1}  
\end{align}
This is precisely the module of syzygies among \(x, y, z\).

\medskip
\noindent
Recall the Koszul resolution of the residue field 
\(\R = S/(x, y, z)\):
\[
0 \longrightarrow 
S(-3)
\xrightarrow{\;\begin{bmatrix} z \\ -y \\ x \end{bmatrix}\;}
S(-2)^3
\xrightarrow{\;\begin{bmatrix}
0 & -z & y \\
z & 0 & -x \\
-y & x & 0
\end{bmatrix}\;}
S(-1)^3
\xrightarrow{[\,x\ y\ z\,]}
S
\longrightarrow \R \longrightarrow 0.
\]

A basis for the first syzygies of \((x, y, z)\) is
\[
(0, -z, y), \qquad (z, 0, -x), \qquad (-y, x, 0).
\]
Thus, the solutions to \Cref{eq-f1g1h1} are precisely the 
\(S\)-linear combinations of these generators.
Multiplying by \(x, y, z\) respectively gives
three generators of \( \D_H(\hmp) \):
\[
[0,-xz,xy],\qquad
 [yz,0, -xy], \qquad
[-yz,xz,0].   
\]
To obtain generators parallel to $\{z=0\}$, we subtract suitable $S$-multiples of 
$\xi_E=(x,y,z)\in\D(\hmp)$:
\[
-[0,-yz,yz]+y\,\xi_E = [xy,yz+y^2,0],
\qquad
[xz,0,-xz]+x\,\xi_E = [x(z+x),xy,0],
\qquad
[-xy,xy,0].
\]
Restricting to the slice $z=1$ yields explicit generators of $\Pt{\mP}_{\le k}$:
\[
(xy,y+y^2),
\qquad
(x+x^2,xy),
\qquad
(-xy,xy).
\]
Note that the first one matches to the field considered in \cref{ex:triangle}.

\end{example}

\section{Algorithm}
Our construction in \Cref{sec:interpolation} can be implemented as an explicit algorithm as in \Cref{alg:deg}.
An accompanying implementation is available in SageMath\footnote{\url{https://github.com/shizuo-kaji/HyperPlaneArrangementSAGE}}.

\begin{algorithm}[htb]
\caption{Find a polynomial vector field in $\Pt{\mP}_{\le k}$ with minimal error}
\label{alg:deg}
\begin{algorithmic}[1]
\Procedure{$\mathsf{FindPolyTangentialWithDegreeBound}$}{$k, \{(x_s,u_s)\mid s\in \Obs \}$}
\Require $k \ge 0$, observation data $\{(x_s,u_s)\}_{s\in \Obs}$.
    \State Compute a set of minimal homogeneous generators for the $\hRx$-module $\syz(J(Q_{\hmp}))$.
    \State From these generators, construct an $\R$-basis $\{\hat{\phi}^j\}_{j=1}^N$ of degree up to $k$.
    \State Dehomogenize the basis by setting $x_0=1$: $\phi^j(x_1,\dots,x_d):= \hat{\phi}^j(1,x_1,\dots,x_d)\in \Pt{\mP}_{\le k}$.
    \State Form the matrix $A$ where $A_{s,j} = \phi^j(x_s)$.
    \State Form the vector $b$ from $u_s$.
   \State Find coefficients $c=(c_j)_{j=1}^N$ that minimize $\|Ac-b\|^2$ (least-squares problem).
    \State Let the resulting vector field be $\xi(x) = \sum_{j=1}^N c_j \,\phi^j(x)$.
    \State \Return $\xi$;
\EndProcedure
\end{algorithmic}
\end{algorithm}
\medskip

We may also ask for the lowest possible degree polynomial vector field that meets a specified error bound.
\begin{problem}\label{prb:error}
    Given an error tolerance $\epsilon\ge 0$, find a polynomial vector field $\xi \in \Pt{\mP}$ of minimum degree $j$ such that $\sum_{s\in \Obs} \|\xi(x_s)-u_s\|^2 \leq \epsilon$.
\end{problem}
Estimating this minimum degree analytically is challenging and related to algebraic properties of the underlying polynomial modules (e.g., regularity), for which only limited results are known (e.g., \cite{degree_seq,ntf-2}).
However, since solving the least-squares problem for a fixed degree is often computationally cheaper, we can adopt a straightforward iterative strategy, as outlined in \Cref{alg:error}. 

\begin{algorithm}[htb]
\caption{Find a polynomial field in $\Pt{\mP}$ meeting a specified error bound}
\label{alg:error}
\begin{algorithmic}[1]
\Procedure{$\mathsf{FindPolyTangentialWithErrorBound}$}{$\epsilon, \{(x_s,u_s)\mid s\in \Obs \}$}
\Require Error tolerance $\epsilon \ge 0$, observation data $\{(x_s,u_s)\}_{s\in \Obs}$.
    \State $k \gets 0$;
    \State Initialize $\xi \gets \mathbf{0}$ (the zero vector field).
    \State Initialize `error' $\gets \sum_{s\in \Obs}\|u_s\|^2$ (error for $\xi=\mathbf{0}$).
    \While {`error' $> \epsilon$}
        \State Construct an $\R$-basis $\{\phi_j\}_{j=1}^{N_k}$ for $\Pt{\mP}_{\le k}$.
        \State Find $\xi \in \Pt{\mP}_{\le k}$ that minimizes $\sum_{s\in \Obs}\|\xi(x_s)-u_s\|^2$ using the basis $\{\phi_j\}$.
        \State `error' $\gets \sum_{s\in \Obs}\|\xi(x_s)-u_s\|^2$.
        \State $k \gets k+1$;
    \EndWhile
    \State \Return $\xi$;
\EndProcedure
\end{algorithmic}
\end{algorithm}

The following proposition ensures that \Cref{alg:error} terminates; more strongly, for $\epsilon=0$ (exact interpolation), a solution exists with a sufficiently high polynomial degree.

\begin{proposition}[Exact interpolation]\label{prop:interpolation_possible}
Let
\[
\{(x_s,u_s)\mid x_s\in\mP,\ u_s\in\mathbb{R}^d\}_{s\in\Obs}
\]
be a finite set of observations, where the points \(x_s\) are distinct and lie in a polytope \(\mP\subset\mathbb{R}^d\), and each associated vector \(u_s\in\mathbb{R}^d\).
Assume that whenever \(x_s\) lies on a boundary facet \(F_j\) of \(\mP\), the vector \(u_s\) is tangent to \(F_j\).
Then there exists a polynomial vector field \(\xi\in\Pt{\mP}_{\le k}\) for some \(k\) such that \(\xi(x_s)=u_s\) for all \(s\in\Obs\).
Consequently, \Cref{prb:error} admits a solution for any \(\epsilon\ge 0\).
\end{proposition}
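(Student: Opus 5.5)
Since $\Pt{\mP}$ is a module over $\Rx$ (\Cref{def:vectorfields}), I will build the interpolant as a finite sum of products: a tangential field multiplied by a polynomial bump. The bump localizes at one observation point; the tangential field supplies the right vector there. Write $Q=\prod_{i=1}^m h_i$ for the affine defining polynomial of the supporting hyperplanes. For each $s\in\Obs$, let $I_s=\{i\mid h_i(x_s)=0\}$ be the set of facets containing $x_s$; by hypothesis $\langle u_s,\alpha_i\rangle=0$ for all $i\in I_s$.

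\textbf{Step 1 (local tangential field).} For each $s$, I look for $\eta_s\in\Pt{\mP}$ with $\eta_s(x_s)=u_s$. Take the constant field $u_s$, which is tangent to every $H_i$ with $i\in I_s$ because $\langle u_s,\alpha_i\rangle=0$ there. Multiply it by $g_s=\prod_{i\notin I_s} h_i$. Then
\[
\eta_s=g_s\,u_s
\]
has $\langle\eta_s,\alpha_i\rangle=g_s\langle u_s,\alpha_i\rangle$. This is identically zero for $i\in I_s$, and it lies in $(h_i)$ for $i\notin I_s$. By \Cref{lem:1tangent}, $\eta_s$ is tangent to every $H_i$, so it belongs to $\Pt{\mP}$. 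Moreover $g_s(x_s)\neq 0$ since $h_i(x_s)\neq0$ for $i\notin I_s$. Replacing $u_s$ by $u_s/g_s(x_s)$ gives $\eta_s(x_s)=u_s$.

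\textbf{Step 2 (Lagrange-type bumps).} Because the $x_s$ are distinct, there are polynomials $p_s\in\Rx$ with $p_s(x_t)=\delta_{st}$. For instance, take
\[
p_s(x)=\prod_{t\neq s}\frac{\langle x-x_t,\,x_s-x_t\rangle}{|x_s-x_t|^2}.
\]
Then
\[
\xi=\sum_{s\in\Obs}p_s\,\eta_s
\]
is in $\Pt{\mP}$ by the module structure: multiplying by $p_s$ keeps $\langle p_s\eta_s,\alpha_i\rangle\in(h_i)$. Also $\xi(x_t)=\eta_t(x_t)=u_t$ for every $t$. Setting $k$ to the maximal degree of the summands shows $\xi\in\Pt{\mP}_{\le k}$.

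\textbf{Step 3 (consequence).} Since $\Pt{\mP}_{\le k}\subseteq\Pt{\mP}_{\le k'}$ for $k\le k'$, the least-squares error in \Cref{alg:error} is nonincreasing in $k$. It reaches $0\le\epsilon$ once $k$ attains the degree found above, so \Cref{prb:error} has a solution and the algorithm terminates.

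The only delicate point is Step 1 at points lying on several facets, for example vertices. There the tangency hypothesis must be read as tangency to every facet through $x_s$. The constant-field construction handles this uniformly, because it only uses $\langle u_s,\alpha_i\rangle=0$ for each $i\in I_s$ separately. I also need tangency to the full hyperplane $H_i$ rather than just to the facet $F_i$; \Cref{cor:hyperplane_test} shows the two conditions are equivalent for polynomial fields.
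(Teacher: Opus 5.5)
Your proof is correct and follows the paper's construction essentially verbatim: your $g_s=\prod_{i\notin I_s}h_i$ is the paper's $W_s$, and you glue the normalized fields $g_s u_s/g_s(x_s)$ with Lagrange-type bumps exactly as the paper does. The only difference is your linear-factor bump $\prod_{t\neq s}\langle x-x_t,x_s-x_t\rangle/|x_s-x_t|^2$ in place of the paper's $\prod_{t\neq s}\|x-x_t\|^2/\|x_s-x_t\|^2$, which lowers the crude degree bound from $2(|\Obs|-1)+m$ to $|\Obs|-1+m$.
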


\begin{proof}
Let \(F_1,\dots,F_m\) be the facets of \(\mP\) with affine linear defining functions \(h_j\) and unit normals \(\alpha_j\).
For each observation index \(s\in\Obs\), set
\[
J_s:=\{\,j\in\{1,\dots,m\}\mid x_s\notin F_j\,\},
\qquad
W_s(x):=\prod_{j\in J_s} h_j(x).
\]
We have \(W_s(x_s)\neq 0\) and
 \(W_s\) vanishes on every facet that does not contain \(x_s\).

Let
\[
L_s(x)=\prod_{\substack{t\in\Obs\\ t\neq s}} \frac{\|x-x_t\|^2}{\|x_s-x_t\|^2},
\]
so that \(L_s(x_t)=\delta_{st}\) for all \(s,t\in\Obs\).
Define the polynomial vector field
\[
\xi(x):=\sum_{s\in\Obs} L_s(x)\,\frac{W_s(x)}{W_s(x_s)}\,u_s.
\]

For any \(t\in\Obs\),
\[
\xi(x_t)=\sum_{s\in\Obs} L_s(x_t)\,\frac{W_s(x_t)}{W_s(x_s)}\,u_s
=\frac{W_t(x_t)}{W_t(x_t)}\,u_t
=u_t.
\]
Thus it interpolates all prescribed data \((x_s,u_s)\).

Fix \(j\in\{1,\dots,m\}\) and take \(x\in F_j\).
Split the defining sum for \(\xi\) into indices with \(j\in J_s\) and those with \(j\notin J_s\).
If \(j\in J_s\), then \(h_j\) is a factor of \(W_s\), hence \(W_s(x)=0\) on \(F_j\) and that term vanishes on \(F_j\).
If \(j\notin J_s\), the hypothesis gives \(\langle u_s,\alpha_j\rangle=0\).
Therefore for all \(x\in F_j\),
\[
\big\langle L_s(x)\,\tfrac{W_s(x)}{W_s(x_s)}\,u_s,\alpha_j\big\rangle
= L_s(x)\,\tfrac{W_s(x)}{W_s(x_s)}\,\langle u_s,\alpha_j\rangle
=0.
\]
Summing over \(s\) yields \(\langle \xi(x),\alpha_j\rangle=0\) on \(F_j\), and as \(j\) was arbitrary, \(\xi\in\Pt{\mP}\).
A crude bound is
\[
\deg \xi \;\le\; 2(|\Obs|-1)\;+\;\max_{s\in\Obs}\big|J_s\big|
\;\le\; 2(|\Obs|-1)+m.
\]
\end{proof}

\begin{remark}
A natural question arises regarding whether a Weierstrass-type approximation theorem holds in this setting. Specifically, given a continuous vector field $\xi_{\text{true}}$ on $\mP$ and an error tolerance $\epsilon > 0$, can we always find a polynomial vector field $\xi_{\text{poly}} \in \Pt{\mP}$ such that $\|\xi_{\text{true}} - \xi_{\text{poly}}\| < \epsilon$ under an appropriate norm?
\end{remark}
\section{Example}
We illustrate our method using a regular pentagon as the domain \( \mP \subset \R^d \) with \( d=2 \).

Let \( \mP \) be the convex hull of five points forming a regular pentagon centered at the origin:
\[
\mP = \operatorname{conv}\left\{ \left( \cos\left( \frac{(2i-1)\pi}{5} \right),\ \sin\left( \frac{(2i-1)\pi}{5} \right) \right) \mid i = 1,2,3,4,5 \right\}.
\]

Its $m=5$ supporting hyperplanes are given by
\[
H_i = \left\{ x \in \R^2 \mid 
\cos \frac{2i\pi}{5}\, x_1 + \sin \frac{2i\pi}{5}\, x_2
- \cos \frac{\pi}{5} = 0 \right\} \quad (i=1,2,3,4,5).
\]
By taking the cone,
\[
\hH_i = \left\{ x \in \R^3 \mid 
\cos \frac{2i\pi}{5}\, x_1 + \sin \frac{2i\pi}{5}\, x_2
 - \cos \frac{\pi}{5}\, x_0 = 0 \right\}
\]
and by \Cref{def:logarithmic},
\[
Q_{\hmp} = x_0\prod_{i=1}^5 \hh_i
= x_0\prod_{i=1}^5 \left( \cos \frac{2i\pi}{5}\, x_1 + \sin \frac{2i\pi}{5}\, x_2
 - \cos \frac{\pi}{5}\, x_0 \right).
\]

The syzygy module of the Jacobian ideal, $\syz(J(Q_{\hmp}))$, can be computed using Schreyer's algorithm.
In our implementation, we use the computer algebra system \texttt{Singular} through its Sage interface.
In this example, we find that $\syz(J(Q_{\hmp}))$ is generated as an $\hRx$-module by five degree-four elements.

We compute an $\R$-basis of $\Pt{\mP}_{\leq k}$ for $k=4$ and $k=5$ using \Cref{thm:poly-syz}.
In particular, we find
\[
\dim(\Pt{\mP}_{\leq 4}) = 5, \quad
\dim(\Pt{\mP}_{\leq 5}) = 12.
\]

Consider the observations $\{ (x_s, u_s) \}\subset \mP \times \R^2$,
with
\begin{align*}
x_{s1} &= (-1/3,\ -7/10), & u_{s1} &= (3,\ 0), \\
x_{s2} &= (1/4,\ 1/10), & u_{s2} &= (0,\ 0), \\
x_{s3} &= (-4/5,\ 0), & u_{s3} &= (-2,\ 4), \\
x_{s4} &= (1/3,\ 7/10), & u_{s4} &= (2,\ 0), \\
x_{s5} &= (1/5,\ -1/2), & u_{s5} &= (2,\ -1), \\
x_{s6} &= (1/5,\ 1/2), & u_{s6} &= (0,\ 0).
\end{align*}


Here, $(x_{s1}, u_{s1}) = ((-1/3,\ -7/10),\ (3,\ 0))$
means the observed value of the vector field at $(-1/3,\ -7/10)\in \mP$ is $(3,\ 0)$.
Note that 
$(x_{s2}, u_{s2}) = ((1/4, 1/10),\ (0,0))$
is a singular point.

We solve the least-squares problem \Cref{prb:degree} to find a best fitting tangential polynomial field of degree at most $k$;
that is, to find an element in $\Pt{\mP}_{\leq k}$
which best interpolates the observations.

The figures below show the results for various choices of degree and number of observations.

\begin{figure}[htbp]
    \centering
        \includegraphics[width=0.95\linewidth]{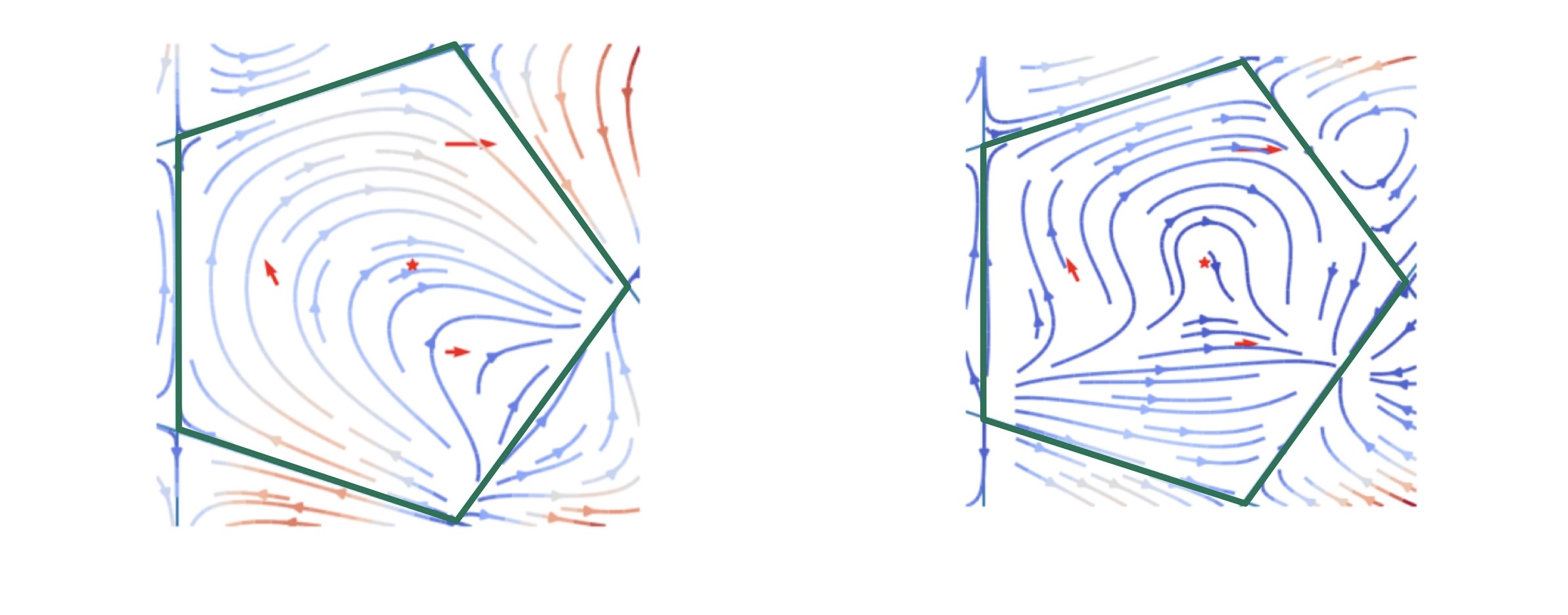}
    \caption{
    The best fitting vector field for the first four observations at $x_{s1},\ldots,x_{s4}$ (indicated by red arrows), using degree $4$ (left) and degree $5$ (right).
For degree $4$, the fit exhibits a noticeable deviation from the observations, as reflected in the higher error value in \Cref{eq:error} of about $2.7$. In contrast, with degree $5$, the fitted vector field exactly interpolates the observations, resulting in zero error.
}
    \label{fig:pentagon-main}
\end{figure}

\begin{figure}[htbp]
    \centering
        \includegraphics[width=0.49\linewidth, trim=80mm 0mm 80mm 0mm, clip]{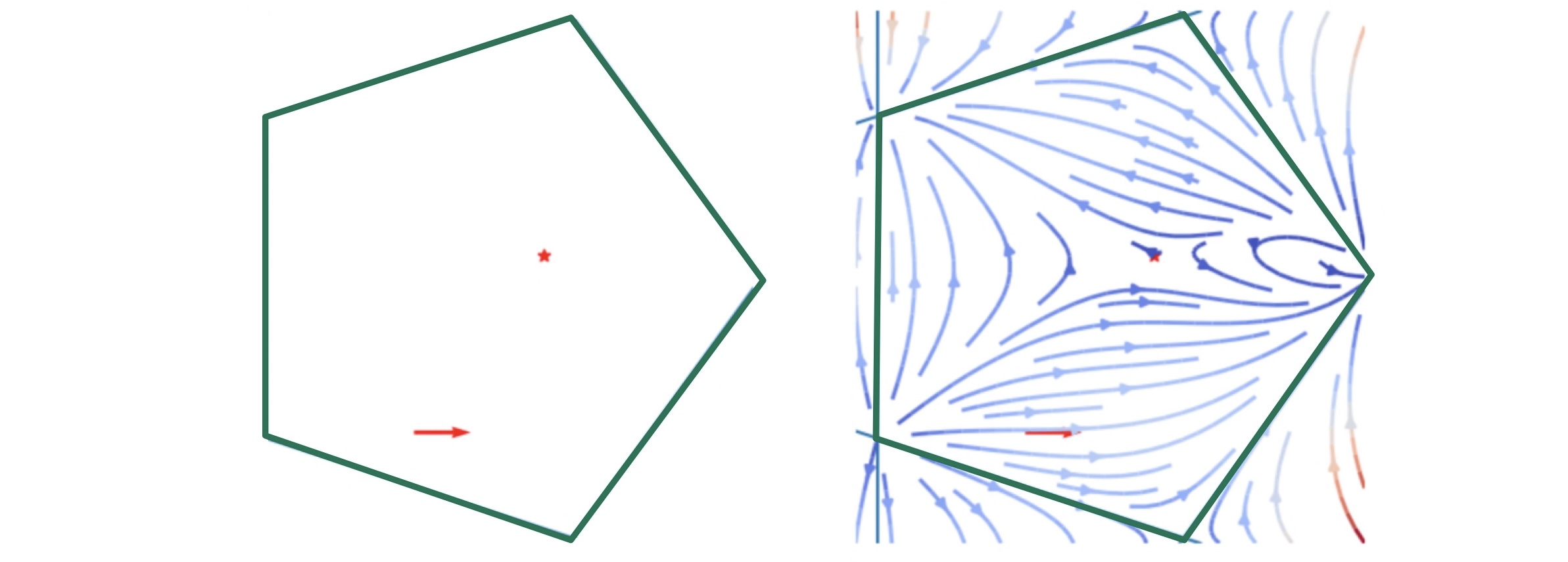}
        \includegraphics[width=0.49\linewidth, trim=80mm 0mm 80mm 0mm, clip]{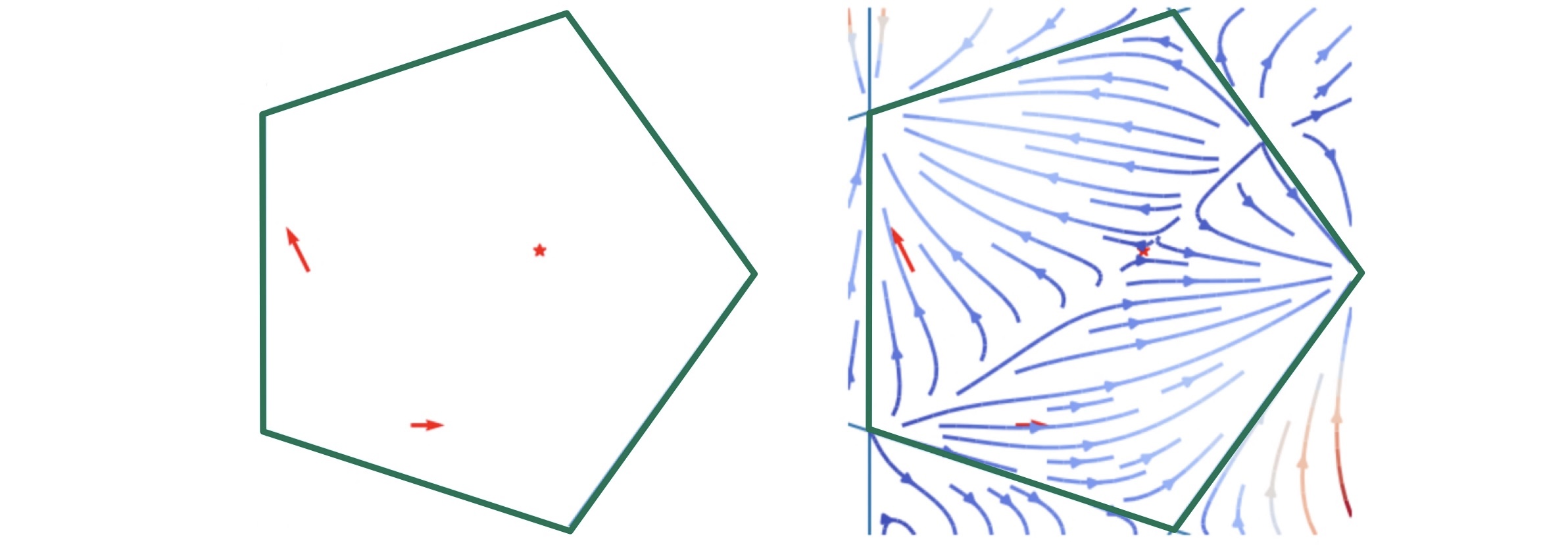}
    \\
    \vspace{0.4cm}
    \centering
\includegraphics[width=0.49\linewidth, trim=80mm 0mm 80mm 0mm, clip]{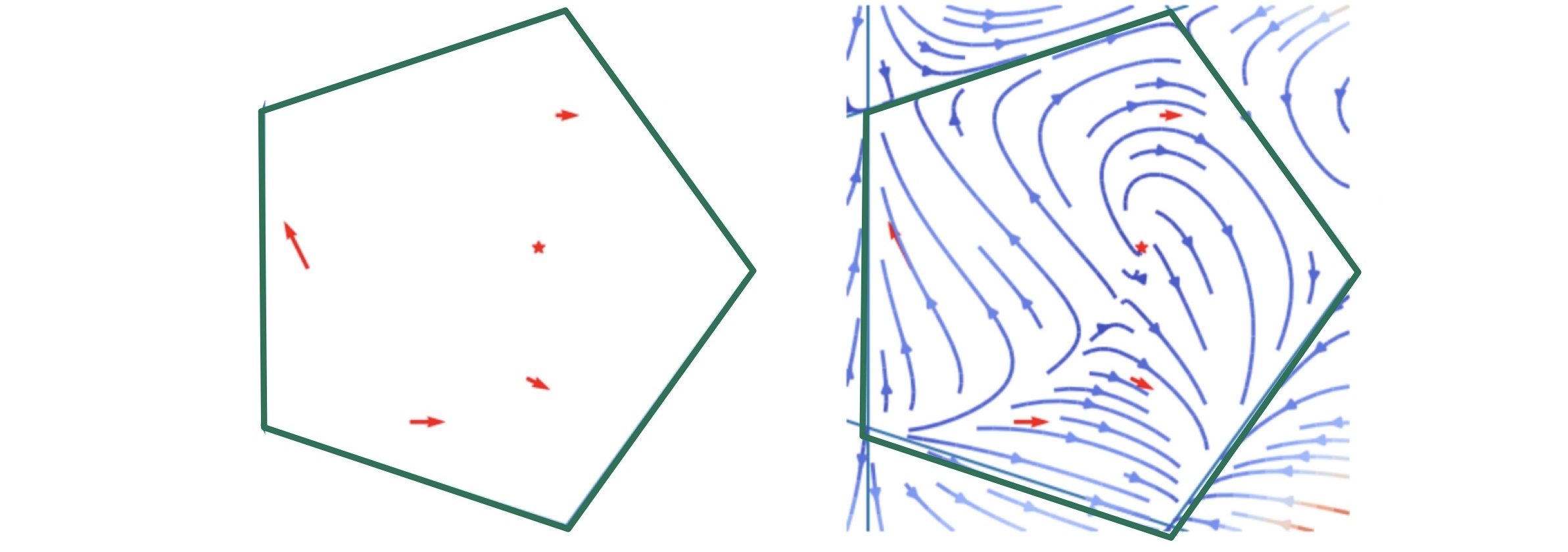}        \includegraphics[width=0.49\linewidth, trim=80mm 0mm 80mm 0mm, clip]{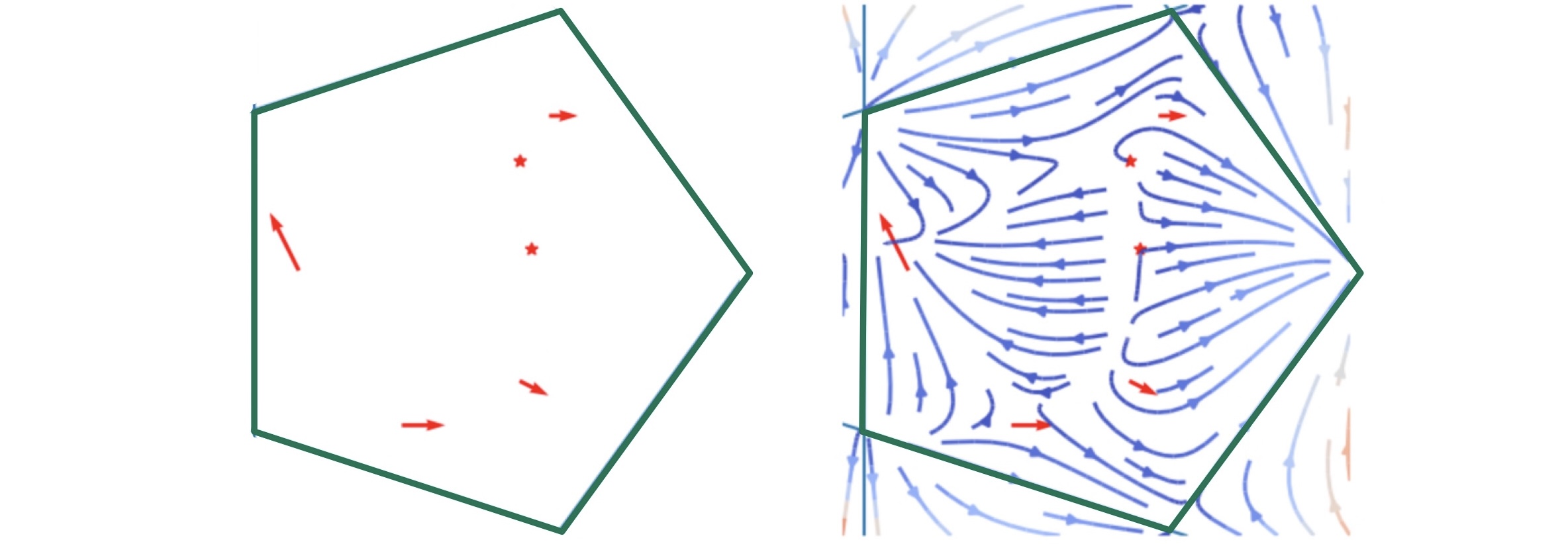}    \caption{ We gradually increase the number of observation points and compute the best fitting fields of degree $5$.
The error is zero for every case.
    }
    \label{fig:pentagon-others}
\end{figure}

Observations need not be restricted to direct vector values; they can also involve other quantities derived from the vector fields. For example, one can consider reconstructing vector fields from sampled vorticity.

\begin{example}
We fit a polynomial vector field to a synthetic flow $\mathbf{v}_{\text{true}}$ on an irregular hexagon $\mP \subset \mathbb{R}^2$.
We sample $n=50$ measurement points in $\mP$. At each point, we evaluate
the velocity $u_s = \mathbf{v}_{\text{true}}(x_s)$ and the vorticity $\omega_s = \left(\nabla \times \mathbf{v}_{\text{true}}\right)(x_s)$.
Small Gaussian noise is added to simulate measurement errors.

For each degree $4\le k \le 10$, we perform two types of fitting:
Velocity-based fitting~\cref{prb:degree}
and vorticity-based fitting, where
we find $\xi \in \Pt{\mP}_{\leq k}$ that minimises the sum of squared errors:
\[    
    \sum_{s \in \Obs} \left|\nabla \times\xi(x_s) - \omega_s \right|^2.
\]

Figure~\ref{fig:poly-comparison} presents a comparison of the approximation quality across different polynomial degrees. The root-mean-square error (RMSE) decreases monotonically with increasing degree.

Figure~\ref{fig:poly-convergence} quantifies the approximation error as a function of polynomial degree.
We observe direct velocity measurements lead to substantially better velocity reconstruction compared to vorticity-only fitting.
Despite poor velocity reconstruction, vorticity-based fitting achieves excellent vorticity approximation, demonstrating that vorticity constraints alone determine the curl structure but not the divergence-free component.

\begin{figure}[htbp]
    \centering
    \includegraphics[width=\textwidth]{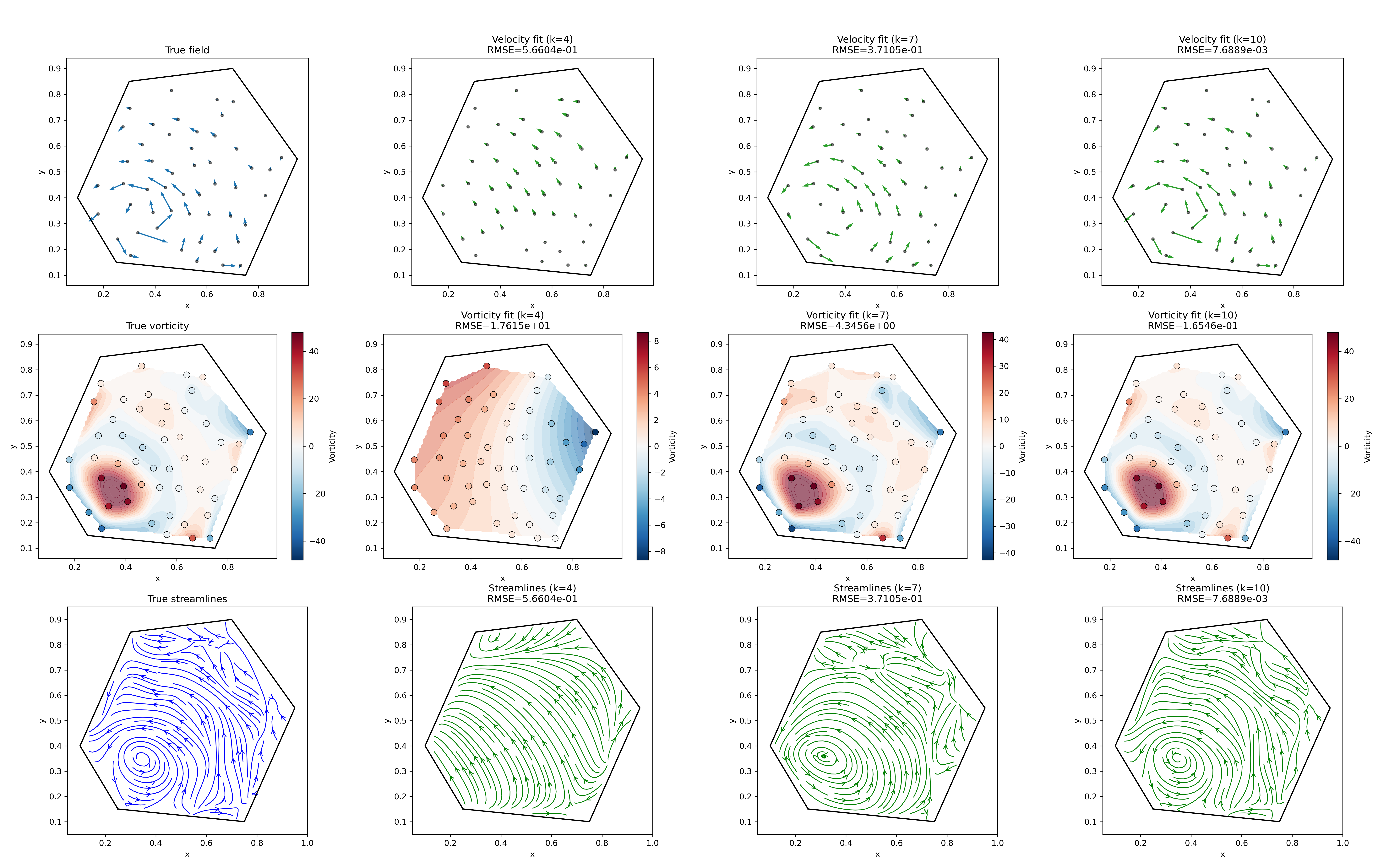}
    \caption{Polynomial approximation of a vortex-based flow field at lower degrees $k \in \{4,5,6,8\}$. \textbf{Left column:} Ground truth field. \textbf{Subsequent columns:} Approximations at increasing degrees. \textbf{Row 1:} Velocity vectors at 50 sample points with RMSE from velocity-based fitting. \textbf{Row 2:} Vorticity distribution from vorticity-based fitting. The diverging colormap (blue-white-red) emphasizes regions of positive and negative rotation. \textbf{Row 3:} Streamlines showing global flow structure.  Lower-degree approximations fail to capture fine-scale circulation features.}
    \label{fig:poly-comparison}
\end{figure}

\begin{figure}[htbp]
    \centering
    \includegraphics[width=0.9\textwidth]{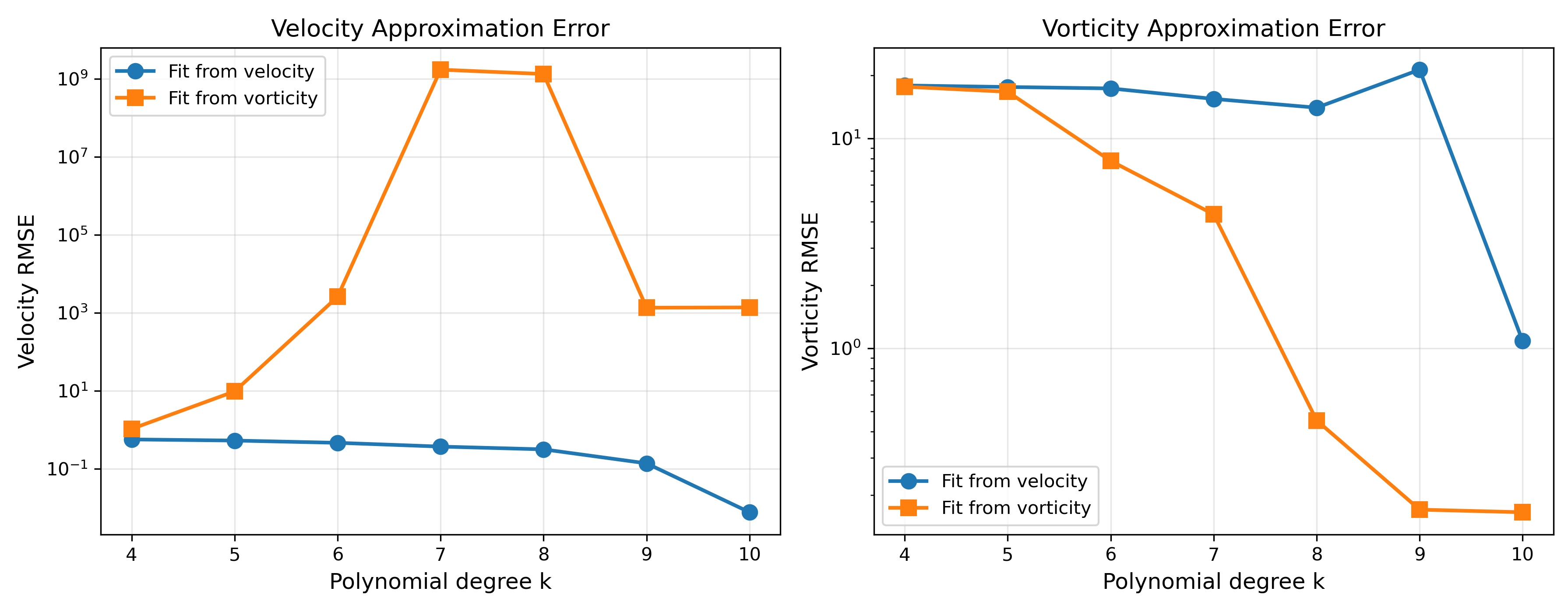}
    \caption{Convergence of approximation error with polynomial degree. \textbf{Left:} Velocity RMSE comparing velocity-based fitting (circles) and vorticity-based fitting (squares). \textbf{Right:} Vorticity RMSE for both methods. The velocity-based approach consistently achieves better velocity reconstruction, while the vorticity-based method excels at matching rotational features. }
    \label{fig:poly-convergence}
\end{figure}

\end{example}

The optimization framework in \Cref{prb:degree} can be naturally extended to search for vector fields within specific subspaces of $\Poly{\mP}_{\le k}$. This extension is particularly relevant for physical applications.
We define the following three subspaces of polynomial vector fields:

First, the space of \emph{divergence-free} (or solenoidal) polynomial vector fields is defined as:
\[
\DF{\mP} = \left\{ \xi = (f_1,\dots,f_d) \in \Poly{\mP} \;\middle|\; \nabla \cdot \xi = \sum_{q=1}^d \frac{\partial f_q}{\partial x_q} = 0 \right\}.
\]

Second, the space of \emph{rotation-free} (irrotational or curl-free) polynomial vector fields is defined as:
\begin{align*}
\RF{\mP} &= \left\{ \xi = (f_1,\dots,f_d) \in \Poly{\mP} \;\middle|\; \frac{\partial f_q}{\partial x_p} = \frac{\partial f_p}{\partial x_q} \quad \forall\, 1 \le p,q \le d \right\} \\
&= \left\{ \xi \in \Poly{\mP} \;\middle|\; \exists\, \Phi \in \Poly{\mP} \text{ such that } \xi = \nabla \Phi \right\}.
\end{align*}
Note that the second equality holds because the convex polytope $\mP$ is simply connected (by the Poincar\'e Lemma), ensuring that every closed 1-form is exact.

Third, the space of \emph{harmonic} polynomial vector fields is defined as:
\[
\Harm{\mP} = \left\{ \xi = (f_1,\dots,f_d) \in \Poly{\mP} \;\middle|\; \Delta \xi = (\Delta f_1, \dots, \Delta f_d) = \mathbf{0} \right\},
\]
where $\Delta = \sum_{q=1}^d \frac{\partial^2}{\partial x_q^2}$ denotes the scalar Laplacian acting component-wise.

We focus on the intersections of these subspaces with the space of tangential vector fields. Specifically, we define:
\[
\Dt{\mP}_{\le k} = \DF{\mP} \cap \Pt{\mP}_{\le k},
\]
\[
\Rt{\mP}_{\le k} =\RF{\mP} \cap \Pt{\mP}_{\le k},
\]
\[
\Ht{\mP}_{\le k} = \Harm{\mP} \cap \Pt{\mP}_{\le k}.
\]

\begin{problem}\label{prb:conditions}
    Construct a vector field $\xi$ solving Problem \ref{prb:degree} subject to the additional constraint that $\xi$ belongs to one of the subspaces $\Dt{\mP}_{\le k}$, $\Rt{\mP}_{\le k}$, or $\Ht{\mP}_{\le k}$.
\end{problem}

Our proposed method seamlessly accommodates these constraints (see \Cref{fig:divfree}). The subspace $\Dt{\mP}_{\le k}$ is of particular significance in fluid mechanics: a vector field in this space models an incompressible flow within a rigid container guaranteeing exact volume preservation.

\begin{figure}[htbp]
    \centering
        \includegraphics[width=0.95\linewidth]{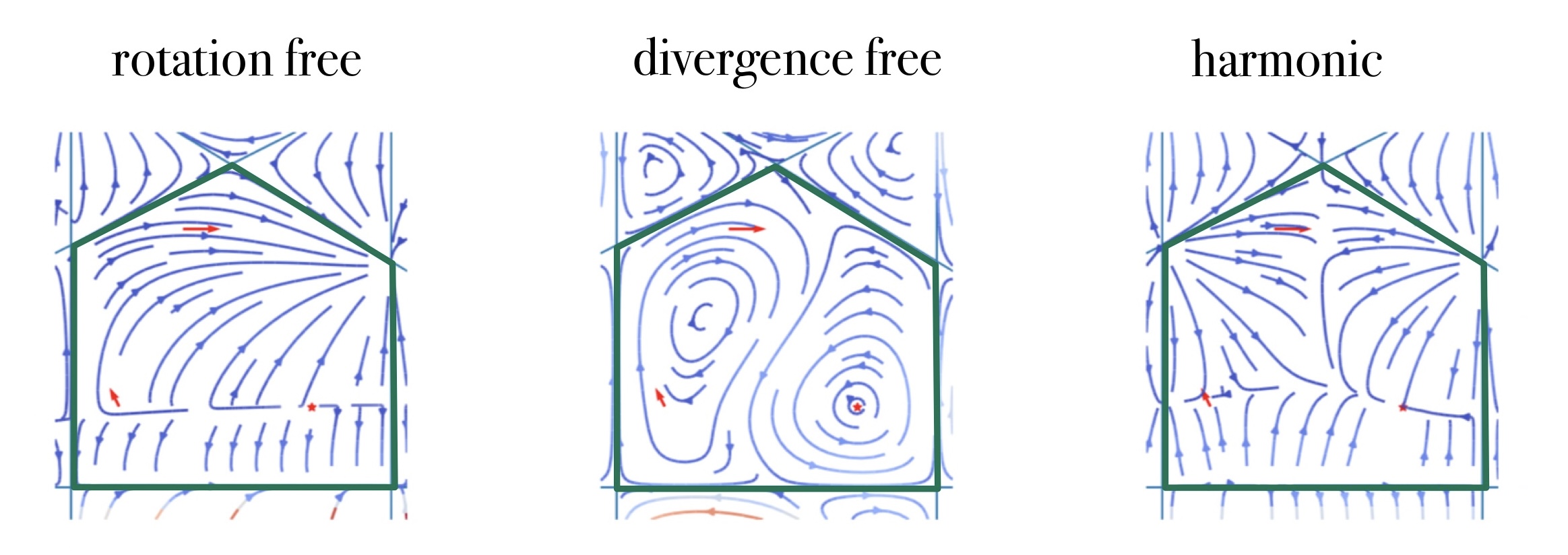}
    \caption{Visual comparison of vector field interpolation. The method accommodates additional constraints, such as the divergence-free condition formulated in \Cref{prb:conditions}.}
    \label{fig:divfree}
\end{figure}

Finally, determining the size of the search space is crucial for understanding the model's complexity. This raises a theoretical open question regarding the dimensions of these vector spaces.

\begin{problem}\label{prb:dimension}
    Determine the vector space dimension of the divergence-free, rotation-free, and harmonic polynomial vector fields of degree at most $k$ on $\mP$, and provide a cohomological interpretation of these dimensions.
\end{problem}



\bibliographystyle{plainnat}
\bibliography{reference}

\end{document}